\newtheorem{theorem}{Theorem}[section]
\newtheorem*{theorem*}{Theorem}
\newtheorem{proposition}[theorem]{Proposition}
\newtheorem{corollary}[theorem]{Corollary}
\theoremstyle{remark}
\newtheorem*{remark*}{Remark}
\newtheorem*{example*}{Example}
\newcommand{\groundfield}{\mathbf{F}}
\newcommand{\aone}{\mathbf{A}^1}
\newcommand{\aonecross}{\mathbf{A}^{1\times}}
\newcommand{\Pic}{\mathrm{Pic}}
\newcommand{\Picz}{\mathrm{Pic}^0}
\newcommand{\NS}{\mathrm{NS}}
\newcommand{\gm}{\mathbf{G}_m}
\newcommand{\codim}{\mathrm{codim}~}
\begin{document}
\title[Algebraic Degeneracy]{Algebraic Degeneracy
of Non-Archimedean Analytic Maps}
\author{Ta Thi Hoai An}
\address{Institute of Mathematics\\
18 Hoang Quoc Viet, Cau Giay\\
Hanoi, VIETNAM}
\email{tthan@math.ac.vn}
\thanks{Partial financial support for the first author provided to her
as a Junior Associate Member of ICTP}
\author{William Cherry}
\thanks{Partial financial support for the second author provided
by the United States National Security Agency under
Grant Number H98230-07-1-0037.
The United States Government is authorized to reproduce and
distribute reprints not­withstanding any copyright notation herein.}
\address{Department of Mathematics\\University of North Texas\\ 
P.O. Box 311430, Denton, TX  76203\\USA}
\email{wcherry@unt.edu}
\author{Julie Tzu-Yueh Wang}
\address{Institute of Mathematics\\Academia Sinica\\Nankang\\
Taipei 11529\\TAIWAN}
\email{jwang@math.sinica.edu.tw}
\date{July 20, 2007}
\begin{abstract}
We prove non-Archimedean analogs of results of
Noguchi and Winkelmann showing algebraic degeneracy of 
rigid analytic maps to projective varieties
omitting an effective divisor with
sufficiently many irreducible components relative to the rank of the group
they generate in the N\'eron-Severi group of the variety.
\end{abstract}
\subjclass[2000]{32P05, 32H25, 14G22}
\keywords{rigid analytic, algebraic degeneracy, N\'eron-Severi group,
Bloch's theorem, Albanese map}
\maketitle
\section{Introduction}
Dufresnoy \cite{Dufresnoy} (or see \cite[\S3.10]{Kobayashi})
proved that a holomorphic map from the complex plane $\mathbf{C}$
that omits $n+k$ hyperplanes in general position in projective space
$\mathbf{P}^n$
must be contained in a linear subspace of dimension at most 
$n/k.$
Noguchi and Winkelmann \cite{NoguchiWinkelmann}
generalized this result to show that a holomorphic curve
in an arbitrary projective manifold (or more generally a compact
K\"ahler manifold) omitting sufficiently many irreducible hypersurfaces
relative to the rank of the group generated by their cohomology classes
must be algebraically degenerate.  This puts the Dufresnoy theorem
in context and clarifies the roll played by the rank of the
N\'eron-Severi group.  Noguchi and Winkelmann's precise result
is:

\begin{theorem*}[Noguchi/Winkelmann]
Let $M$ be a compact K\"ahler manifold of dimension $m.$
Let $\{D_i\}_{i=1}^\ell$ be $\ell$ irreducible hypersurfaces
in general position. Let $r$ be the rank of the group generated
by $\{c_1(D_i)\}_{i=1}^\ell.$
Let $W$ be a closed subvariety of $M$
of dimension $n$ and irregularity $q.$
Suppose there exists an algebraically non-degenerate holomorphic
map from the complex plane $\mathbf{C}$ to $W$ that omits each of
the $D_i$ that does not contain all of $W.$ Then
\begin{enumerate}
\item[(i)] $\#\{W\cap D_i \ne W\}+q \le n + r;$
\item[(ii)] If $\ell>m$ and in addition each of the $D_i$ are ample, then
$$
	n \le \frac{m}{\ell-m} \max\{0,r-q\}.
$$
\end{enumerate}
\end{theorem*}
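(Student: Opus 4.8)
The plan is to combine the logarithmic version of the Bloch--Ochiai theorem with the (quasi-)Albanese construction, applied to the quasi-projective variety obtained by removing from $W$ the divisors it omits. Accordingly, put $\ell'=\#\{i:W\cap D_i\neq W\}$ and $V=W\setminus\bigcup_{\{i\,:\,W\cap D_i\neq W\}}(W\cap D_i)$. Since $f$ omits every $D_i$ not containing $W$, it factors through $V$. After blowing up---both to resolve the singularities of $W$ and to make the boundary simple normal crossings---I may assume that $V$ is a smooth quasi-projective variety with smooth projective compactification $\overline{V}$ and boundary $\partial V=\sum_j E_j$, and that $f$ lifts to an algebraically non-degenerate holomorphic map $\mathbf{C}\to V$; the general-position hypothesis, together with a careful accounting of the exceptional divisors of the resolution, should then give $\#\{E_j\}-\mathrm{rank}\langle c_1(E_j)\rangle\geq\ell'-r$, while the irregularity of $\overline{V}$ equals $q$.

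The next step is to form the quasi-Albanese map $\alpha\colon V\to A:=\mathrm{Alb}(V)$, where $A$ is a semi-abelian variety whose cotangent space at the identity is $H^0(\overline{V},\Omega^1_{\overline{V}}(\log\partial V))$. The residue exact sequence
\[
0\longrightarrow\Omega^1_{\overline{V}}\longrightarrow\Omega^1_{\overline{V}}(\log\partial V)\longrightarrow\bigoplus_j\mathcal{O}_{E_j}\longrightarrow 0 ,
\]
whose connecting homomorphism carries the generator of $H^0(\mathcal{O}_{E_j})$ to $c_1(\mathcal{O}_{\overline{V}}(E_j))$, yields
\[
\dim A \;=\; q + \Big(\#\{E_j\}-\mathrm{rank}\langle c_1(E_j)\rangle\Big)\;\geq\; q+\ell'-r .
\]
On the other hand $\alpha\circ f\colon\mathbf{C}\to A$ is a holomorphic map into a semi-abelian variety, so by the logarithmic Bloch--Ochiai theorem the Zariski closure of its image is a translate of a semi-abelian subvariety $A'\subseteq A$; and since $f$ is Zariski-dense in $V$, that closure equals $\overline{\alpha(V)}$. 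Because $\mathrm{Alb}(V)$ is generated as a group by the differences $\alpha(x)-\alpha(y)$, all of which lie in $A'$, we get $A'=A$, hence $\overline{\alpha(V)}=A$ and $\dim A\leq\dim V=n$. Comparing the two displays gives $q+\ell'-r\leq n$, which is (i).

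For (ii), assume moreover that $\ell>m$ and that each $D_i$ is ample. If $j$ of the $D_i$ contain $W$, then $W$ lies inside their common intersection, which has dimension $m-j$ by general position; hence $j\leq m-n$, so $\ell'\geq\ell-(m-n)$. Substituting into (i) gives $\ell-m\leq r-q$; since $\ell>m$ this forces $r>q$, and since $n\leq\dim M=m$ we conclude
\[
n(\ell-m)\;\leq\; m(\ell-m)\;\leq\; m(r-q)\;=\; m\max\{0,r-q\} ,
\]
which is (ii) after dividing by $\ell-m>0$.

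The main obstacle is the second step's reliance on the logarithmic Bloch--Ochiai theorem---that a holomorphic curve in a semi-abelian variety is algebraically degenerate, with Zariski closure a translated subgroup---together with the generation property of the quasi-Albanese map. A secondary difficulty is the numerical bookkeeping of the first step: one must confirm, via general position and resolution of singularities, that $\#\{E_j\}-\mathrm{rank}\langle c_1(E_j)\rangle$ really is bounded below by $\ell'-r$, the delicate points being the exceptional divisors produced by the resolution and the $D_i$ that meet $W$ improperly.
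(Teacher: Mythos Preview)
This theorem is not proved in the paper; it is quoted from \cite{NoguchiWinkelmann} as motivation, and the paper then establishes non-Archimedean analogs (Theorem~\ref{aonecrossthm} and its corollaries). So the relevant comparison is between your sketch and the paper's proofs of those analogs.

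For part~(i) your strategy---pass to a desingularization, remove the divisors not containing $W$, take the quasi-Albanese map to a semi-Abelian variety, and invoke the logarithmic Bloch--Ochiai theorem together with the fact that differences of image points generate the quasi-Albanese---is exactly the strategy of Theorem~\ref{aonecrossthm}. There the paper replaces your residue-sequence computation by Serre's description of the generalized Albanese \cite{Serre58a,Serre58b}: the toric rank equals the rank of the kernel $J$ of the map from the free group on the boundary components to $\NS(\widetilde Y)$, and one bounds $\mathrm{rank}\,J$ from below by observing that the kernel $K$ of $\mathbf{Z}^\ell\to\NS(X)$ injects into $J$ via $\tilde\iota^*$. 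That injection, however, requires the pulled-back divisors $\tilde\iota^*D_i$ to be \emph{distinct}, which is an explicit hypothesis in Theorem~\ref{aonecrossthm}. Your ``secondary difficulty'' is therefore a genuine one: general position on $M$ does not by itself force the $D_i\cap W$ to be distinct, and the exceptional divisors from resolution must also be tracked. Noguchi--Winkelmann handle this, but it is not automatic.

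For part~(ii) your route is genuinely different from, and more elementary than, the argument the paper invokes. The paper (following \cite{NoguchiWinkelmann}, pages 606--607, reproduced in the proof of Corollary~\ref{degencor}) introduces the multiplicities $s_j=\#\{i:D_i\cap W=D_j\cap W\}$, splits into the cases $\ell_0\le n$ and $\ell_0>n$, and uses ampleness to guarantee $D_i\cap W\ne\emptyset$ together with an averaging inequality on the $s_j$. You instead observe that general position alone gives $j\le m-n$ for the number of $D_i$ containing $W$, feed $\ell'\ge\ell-(m-n)$ into~(i) to obtain $\ell-m\le r-q$, and then use only the trivial bound $n\le m$. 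This is valid and proves~(ii) as stated (indeed without using ampleness). The more elaborate counting argument is needed not for~(ii) itself but for the sharper mixed bound $\ell\le\max\{r+\codim Y,\,r\cdot\dim X/\dim Y\}$ of Corollary~\ref{degencor}, which does not follow from your simplification.
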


Here, $c_1$ denotes the first Chern class,
and the irregularity $q$ is the dimension of the space of holomorphic
$1$-forms on a desingularization of $W,$
which is the dimension of the Albanese variety. When the irregularity
exceeds the dimension, Bloch \cite{Bloch} (or see
\cite{Ochiai} for a rigorous proof)
proved all holomorphic curves are algebraically degenerate
by showing the image in the Albanese variety is degenerate.  
This was extended by Noguchi
(and by Noguchi/Winkelmann in the non-algebraic K\"ahler case) --
see the references in \cite{NoguchiWinkelmann} -- to conclude
that a holomorphic map from $\mathbf{C}$ omitting an effective divisor $D$
such that the logarithmic irregularity with respect to $D,$
\textit{i.e.,} the dimension of the space of logarithmic $1$-forms with
poles along $D,$ exceeds the dimension.  This was again done by
composing with the quasi-Albanese morphism, but this time since the
quasi-Albanese variety need not be compact, there are additional
difficulties.

Our purpose here is to explain the rigid analytic analog of 
the Noguchi/Winkel\-mann theorem for non-Archimedean analytic maps,
at least in the case of projective
algebraic varieties.  The non-Archimedean analog of
Bloch's theorem was proven by Cherry in \cite{Cherry94}.
Modulo the standard constructions of Albanese and Picard varieties
as in \cite{LangAV},
it is then a routine matter to conclude analogous algebraic degeneracy
results.  
\par
One could perhaps argue that the natural category for us to work in
is that of complete rigid analytic spaces, rather than projective varieties.
However, the Albanese map is crucial for our arguments, and 
as far as we know, this has not been worked out for non-algebraic
rigid analytic spaces. Since we restrict ourselves to projective varieties,
we do not hesitate to appeal to algebraic results when convenient, even
when there are alternative approaches that work for more general rigid
analytic spaces.
\par
Let $\groundfield$ be an algebraically closed field complete with
respect to a non-Archimedean absolute value and of arbitrary
characteristic.  Throughout, a variety
will mean an algebraic variety defined over $\groundfield,$ and
a morphism will mean an algebraic morphism defined over $\groundfield.$
We will use $\aone$ to denote the affine line over $\groundfield$
and $\aonecross$ to denote $\aone\setminus\{0\}.$
We will use $\gm$ to denote the multiplicative group, which as
an analytic space or a variety is of course the same thing as
$\aonecross.$ Analytic will mean rigid analytic over $\groundfield.$  
One can think concretely by thinking of an analytic map from
$\aone$ (resp.\ $\aonecross$) to an
algebraic variety $X$ as given by a solution to the defining equations
of $X$ in formal power series (resp.\ formal Laurent series)
with coefficients in $\groundfield$ and
converging for arbitrary positive radii.

\section{Non-Archimedean Analytic Maps to Semi-Abelian Varieties}

We begin by recalling the main results of \cite{Cherry94}
and extending them to semi-Abelian varieties.

\begin{theorem}\label{savhyp}
Any analytic map from $\aone$ to a semi-Abelian variety
must be constant.
\end{theorem}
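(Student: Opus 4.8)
The plan is to reduce the general semi-Abelian case to the two extreme cases: an Abelian variety and the torus $\gm$. Recall that a semi-Abelian variety $G$ sits in a short exact sequence $0 \to T \to G \to A \to 0$, where $A$ is an Abelian variety and $T \cong \gm^s$ is a torus. Given an analytic map $f : \aone \to G$, I would first compose with the projection $G \to A$ to get an analytic map $\aone \to A$. The non-Archimedean Bloch-type result of Cherry \cite{Cherry94} — or rather the even more classical fact that any analytic map from $\aone$ to an Abelian variety is constant (an Abelian variety contains no rational or affine-line images; analytically, an entire function avoiding enough of the group must be constant) — shows this composite is constant. Hence $f$ factors through a single fiber, which is a translate of the torus $T$. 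So it suffices to prove the statement for $T = \gm^s$, and by looking at each coordinate, for $\gm = \aonecross$ itself.

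For the torus case, an analytic map $f : \aone \to \gm$ is an analytic function $\aone \to \groundfield$ that is nowhere zero. Such a function is an entire analytic function on $\aone$ with no zeros, hence (by the non-Archimedean Weierstrass factorization / the theory of Newton polygons for entire functions, as in the standard references) it must be a nonzero constant. Indeed, a nonconstant entire function on $\aone$ over an algebraically closed non-Archimedean field always has a zero — this is the non-Archimedean analog of the little Picard theorem in its most basic form, and follows from examining the Newton polygon: a nonconstant power series converging on all of $\aone$ has unbounded Newton polygon slopes, forcing infinitely many zeros. Therefore $f$ is constant, completing the torus case.

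Assembling the pieces: the composite $\aone \to G \to A$ is constant, so $f$ maps into a coset of $T$; translating, we may assume $f$ maps into $T = \gm^s$; each coordinate $\aone \to \gm$ is constant by the previous paragraph; hence $f$ is constant. The main obstacle I anticipate is purely expository rather than mathematical: one must be careful that the splitting $0 \to T \to G \to A \to 0$ and the identification of fibers of $G \to A$ with torsors under $T$ are used correctly in the analytic (not merely algebraic) category, and that "analytic map to an Abelian variety is constant" is properly cited or derived from \cite{Cherry94} together with the structure theory of \cite{LangAV}. One should also confirm that the argument is characteristic-free, which it is, since the Newton polygon argument and the structure theory of semi-Abelian varieties both work in arbitrary characteristic.
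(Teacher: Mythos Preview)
Your proposal is correct and follows essentially the same strategy as the paper: project along $G\to A$, invoke \cite{Cherry94} to see the composite is constant, and then handle the resulting map into a torus fiber via the Newton/valuation polygon fact that a nowhere-vanishing entire function on $\aone$ is constant. The paper's proof is terser, merely recording that once \cite{Cherry94} gives constancy for maps to Abelian varieties (there obtained via the good-reduction case plus semi-Abelian uniformization), ``the same argument repeated'' yields the general semi-Abelian case---which is precisely your reduction.
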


\begin{proof}
The proof is essentially the argument on page~401 of \cite{Cherry94}.
What was actually shown in \cite{Cherry94} was that an analytic map
into the extension of an Abelian variety with good reduction by a 
torus must be constant.  This was then used with the semi-Abelian
uniformization theorem to show that an analytic map to an arbitrary
Abelian variety must be constant.  Once one has this, the same argument
repeated then gives that an analytic map from $\aone$
to an arbitrary semi-Abelian
variety must be constant.
\end{proof}

In characteristic zero, Theorem~\ref{savhyp} can also be proven by
the method of \cite{CherryRu}.

\begin{corollary}\label{savdegen}
If $X$ is a variety admitting a
non-constant morphism to a semi-Abelian variety, then any analytic
map from $\aone$ to $X$ is algebraically degenerate.
\end{corollary}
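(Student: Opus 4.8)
The plan is to reduce the statement immediately to Theorem~\ref{savhyp} by composing with the given morphism. Let $\varphi\colon X\to A$ be a non-constant morphism to a semi-Abelian variety $A,$ and let $f\colon\aone\to X$ be an analytic map. Since $\varphi$ is a morphism of varieties, it induces a morphism of the associated rigid analytic spaces, so the composition $\varphi\circ f\colon\aone\to A$ is again analytic. Applying Theorem~\ref{savhyp} to this composition, one concludes that $\varphi\circ f$ is constant, say with value $a\in A.$

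The next step is to observe that this constancy forces $f$ to be algebraically degenerate. Because $\varphi\circ f\equiv a,$ the image of $f$ — and hence its Zariski closure — is contained in the fiber $\varphi^{-1}(a),$ which is a closed subvariety of $X.$ Since $\varphi$ is non-constant, its image is not a single point, and therefore no fiber of $\varphi$ can equal all of $X;$ in particular $\varphi^{-1}(a)$ is a proper closed subvariety of $X.$ Thus the image of $f$ lies in a proper closed subvariety, which is precisely the assertion that $f$ is algebraically degenerate.

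There is essentially no serious obstacle here: once Theorem~\ref{savhyp} is available, the corollary is formal. The only points worth a word of care are that algebraic morphisms pass to the associated analytic category (so that the map to which Theorem~\ref{savhyp} is applied is genuinely analytic), and that ``non-constant'' for $\varphi$ really does exclude the possibility $\varphi^{-1}(a)=X;$ both are routine.
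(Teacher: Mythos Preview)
Your argument is correct and is exactly the intended one: the paper states this corollary without proof because it is an immediate consequence of Theorem~\ref{savhyp} via composition with the given morphism and the observation that a non-constant morphism has proper fibers. There is nothing to add.
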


\begin{corollary}\label{posq}
If $X$ is a non-singular projective variety in characteristic zero with
positive irregularity, then any analytic map from $\aone$ to 
$X$ must be algebraically degenerate.
\end{corollary}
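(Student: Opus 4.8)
The plan is to route everything through the Albanese morphism and then invoke Corollary~\ref{savdegen}. First I would recall the standard construction of the Albanese variety $\mathrm{Alb}(X)$ of the non-singular projective variety $X$ (see \cite{LangAV}): it is an Abelian variety equipped with a morphism $\alpha\colon X\to\mathrm{Alb}(X)$, well-defined up to translation, whose image generates $\mathrm{Alb}(X)$ as an algebraic group. In characteristic zero the dimension of $\mathrm{Alb}(X)$ equals the irregularity $q=\dim_{\groundfield}H^0(X,\Omega^1_X)$, and this identification is the only place where the characteristic hypothesis enters.

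Next, since $q>0$ by hypothesis, $\mathrm{Alb}(X)$ is a nontrivial Abelian variety, in particular a nontrivial semi-Abelian variety. I claim $\alpha$ is non-constant: were $\alpha$ constant, its image would be a single point, which after translation we may take to be the identity, and a single point generates only the trivial subgroup of $\mathrm{Alb}(X)$, contradicting the defining property that $\alpha(X)$ generates the nontrivial group $\mathrm{Alb}(X)$. Hence $X$ admits a non-constant morphism to a semi-Abelian variety, and Corollary~\ref{savdegen} applies verbatim to give that any analytic map from $\aone$ to $X$ is algebraically degenerate.

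I do not expect a genuine obstacle: the content is entirely the classical theory of the Albanese map, and the characteristic-zero hypothesis is precisely what guarantees that positive irregularity forces $\dim\mathrm{Alb}(X)>0$. (In positive characteristic the relevant invariant would be $\dim\mathrm{Alb}(X)$ itself, which can be strictly smaller than $q$, so one would simply rephrase the hypothesis in terms of the Albanese dimension.) The only point meriting a sentence of care is the non-constancy of $\alpha$, which, as indicated above, is immediate from its generating property.
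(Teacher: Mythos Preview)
Your proof is correct and follows essentially the same route as the paper: use non-singularity to ensure the Albanese map is a morphism, use characteristic zero to identify $\dim\mathrm{Alb}(X)$ with the irregularity, and then invoke Corollary~\ref{savdegen}. The paper's version is terser and leaves the non-constancy of the Albanese map implicit, whereas you spell it out via the generating property; your parenthetical remark about positive characteristic also anticipates the paper's later remark concerning Igusa's example.
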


For the case of surfaces, Corollary~\ref{posq} was pointed out in
\cite{Cherry93}.

\begin{proof}
Since we assume $X$ to be non-singular, the Albanese map is a morphism,
see \textit{e.g.,} \cite[Ch.~2]{LangAV},
and since we have assumed characteristic
zero, the dimension of the Albanese variety is the same as the irregularity.
\end{proof}

\begin{corollary}\label{ratabdegen}
If $X$ is a projective variety admitting a non-constant rational map
to an Abelian variety, then any analytic map from $\aone$ to $X$
is algebraically degenerate.
\end{corollary}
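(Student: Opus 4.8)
The plan is to argue by contradiction. Suppose $f\colon\aone\to X$ is analytic with Zariski-dense image, so that $\overline{f(\aone)}=X$; in particular $X$ is then irreducible. Let $\phi\colon X\dashrightarrow A$ be the given non-constant rational map to the Abelian variety $A$, let $U\subseteq X$ be its (dense, open) domain of definition, and put $S:=f^{-1}(X\setminus U)$. Pulling back local equations for the proper closed subvariety $X\setminus U$ shows that $S$ is the common zero set of finitely many analytic functions on $\aone$ that are not all identically zero (they are not, since $f(\aone)$ meets the dense set $U$), hence $S$ is a discrete subset of $\aone$. The heart of the matter is to manufacture an \emph{honest} analytic map $h\colon\aone\to A$ that agrees with $\phi\circ f$ on $\aone\setminus S$. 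Granting this, Theorem~\ref{savhyp} forces $h$ to be constant, say $h\equiv a_0$; then $f$ carries $\aone\setminus S=f^{-1}(U)$ into the fiber $\phi^{-1}(a_0)$, which is a proper closed subvariety of $X$ because $\phi$ is non-constant. On the other hand $f(\aone\setminus S)=f(\aone)\cap U$ is Zariski-dense in $X$, being the intersection of a dense set with a dense open set --- a contradiction.

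To build $h$, I would work disk by disk. For each real $R>0$ the affinoid disk $\mathbf{D}_R=\{|z|\le R\}$ has affinoid algebra $T_R=\groundfield\langle z/R\rangle$, which is a principal ideal domain, and the restriction $f|_{\mathbf{D}_R}$ corresponds to a morphism of $\groundfield$-schemes $\psi_R\colon\mathrm{Spec}\,T_R\to X$; these are compatible under the restriction maps $T_{R'}\to T_R$ for $R<R'$. Since $T_R$ is a domain, the generic point of $\mathrm{Spec}\,T_R$ cannot map into $X\setminus U$ (that would force $\mathbf{D}_R\subseteq S$, impossible as $S$ is discrete while $\mathbf{D}_R$ is not), so $\phi\circ\psi_R$ is a rational map defined on a dense open subset of $\mathrm{Spec}\,T_R$ whose complement --- a proper closed subset of an irreducible one-dimensional scheme --- is the finite set of closed points lying over $S\cap\mathbf{D}_R$. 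At each such point the local ring of $T_R$ is a discrete valuation ring, so the valuative criterion of properness, applied to the proper variety $A$, extends $\phi\circ\psi_R$ over it; performing this at each of the finitely many points yields a morphism $h_R\colon\mathrm{Spec}\,T_R\to A$. Any two such extensions of $\phi\circ\psi_R$ agree (the target is separated), so the $h_R$ are compatible; passing back to the analytic category, they give analytic maps $\mathbf{D}_R\to A$ that glue to an analytic $h\colon\aone\to A$, and tracing through the correspondences shows $h=\phi\circ f$ on $\aone\setminus S$.

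The only delicate point --- and the reason it is convenient to stay among projective varieties --- is the dictionary between the analytic and algebraic categories: that an analytic map from the affinoid disk $\mathbf{D}_R$ to a variety is the same datum as a morphism from $\mathrm{Spec}\,T_R$ to that variety, so that $f$ can be fed into the valuative criterion and the algebraic morphisms $h_R$ can be turned back into analytic maps to be glued. Once this bookkeeping is granted, the geometric input is the nearly trivial fact that a rational map from a regular one-dimensional scheme to a proper scheme is automatically a morphism: there is no codimension-$\ge 2$ indeterminacy, only the codimension-one indeterminacy that the valuative criterion removes. One could alternatively replace $X$ by the graph $\Gamma\subseteq X\times A$ of $\phi$, which does admit a genuine non-constant morphism to $A$, and quote Corollary~\ref{savdegen}; but lifting $f$ across $S$ to a map $\aone\to\Gamma$ runs into exactly the same removable-singularity issue, so constructing $h\colon\aone\to A$ directly is the more efficient route.
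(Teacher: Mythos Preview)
Your proof is correct and follows the same overall strategy as the paper's: show that the a~priori only rationally defined composition $\phi\circ f$ extends across its indeterminacy to an honest analytic map $h\colon\aone\to A$, then invoke Theorem~\ref{savhyp} to conclude $h$ is constant and hence $f$ is degenerate.  The difference is entirely in how the extension $h$ is produced.  The paper does this in one line and globally on $\aone$: embed $A\hookrightarrow\mathbf{P}^N$, write $\phi=[\phi_0,\dots,\phi_N]$ with the $\phi_j$ homogeneous polynomials, observe that the $\phi_j\circ f$ are entire functions on $\aone$ not all identically zero (else $f$ lands in the indeterminacy locus), and simply divide out their greatest common divisor to obtain a globally defined analytic map $\aone\to\mathbf{P}^N$ whose image lies in $A$.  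You instead work disk-by-disk, pass through the dictionary between analytic maps $\mathbf{D}_R\to X^{\mathrm{an}}$ and scheme morphisms $\mathrm{Spec}\,T_R\to X$, and appeal to the valuative criterion of properness on the regular one\nobreakdash-dimensional scheme $\mathrm{Spec}\,T_R$.  These are really the same mechanism viewed from two angles --- factoring out the GCD in the PID $T_R$ \emph{is} the explicit implementation of the valuative criterion for maps into projective space --- so your route is sound, just more abstract.  The paper's version has the virtue of being shorter and of avoiding the analytic/algebraic dictionary you flag as delicate; your version makes transparent that the only geometric input is properness of $A$ together with one\nobreakdash-dimensionality and regularity of the source, which is a useful perspective and would adapt more readily to other one\nobreakdash-dimensional rigid sources.
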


\begin{proof}
Let $\phi:X\rightarrow A$ be a non-constant rational map to an Abelian
variety and let $f$ be an analytic map from $\aone$ to $X.$
The idea is that $\phi\circ f$ is a meromorphic mapping from
$\aone$ to $A$ and hence analytic, unless $f$ is contained
in the indeterminacy locus of $\phi,$ whence degenerate.  The corollary then
follows from the theorem.
\par
Lacking a convenient reference
for the general fact, we give an ad-hoc proof here.
Embed $X$ and $A$ in projective spaces.
Then $\phi$ can be represented as
[$\phi_0,...,\phi_N$] where the $\phi_i$ are homogeneous polynomials
and $N$ is the dimension of the projective space in which we have 
embedded $A.$  Then, $\phi_i\circ f$ are analytic functions on 
$\aone.$  If they are all identically zero, then the image of $f$
is contained in the indeterminacy locus of $\phi$ and is algebraically
degenerate.  Otherwise, factoring out the greatest common divisor
(well-defined up to a non-zero constant)
from the $\phi_i\circ f$ if necessary, we see $\phi\circ f$ can be made to be
well-defined on all of $\aone.$ Hence, $\phi\circ f$ is an analytic
map to $A$
and hence constant by the theorem.  Therefore, $f$ is algebraically 
degenerate.
\end{proof}

\begin{proposition}\label{gmprodprop}
Let $T$ be a multiplicative torus and let $f$ be an analytic map
(not assumed to be a group homomorphism) from $T$ to $\gm.$
Then $f$ is the translation of a group homomorphism.
\end{proposition}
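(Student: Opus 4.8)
The plan is to show directly that $f$ must be a nonzero constant times a monomial $t_1^{a_1}\cdots t_n^{a_n}$. Since $T\cong\gm^n$ and the characters of $\gm^n$ are exactly the monomials $t_1^{a_1}\cdots t_n^{a_n}$ with $(a_1,\dots,a_n)\in\mathbf Z^n$, while a translation in $\gm$ is multiplication by a nonzero constant, this is precisely the assertion that $f$ is the translation of a group homomorphism. Identify $T$ with $(\aonecross)^n$ with coordinate units $t_1,\dots,t_n$; then an analytic map $T\to\gm$ is the same thing as a nowhere-vanishing analytic function $f$ on $\gm^n$ (equivalently, $f$ and $1/f$ are both analytic, i.e.\ $f$ is a unit in the ring of global analytic functions).

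First I would settle the one-dimensional case: a nowhere-vanishing analytic function $f$ on $\aonecross$ has the form $ct^{k}$. Expand $f=\sum_{k\in\mathbf Z}c_kt^k$, convergent for every radius $\rho\in(0,\infty)$. By the non-Archimedean theory of Newton polygons, the function $\rho\mapsto\log\max_k|c_k|\rho^k$, which equals the sup-norm of $f$ over $|t|=\rho$, is piecewise linear in $\log\rho$, and its breakpoints, together with the jumps in slope there, record exactly the radii and multiplicities of the zeros of $f$. Since $f$ has no zeros in $\aonecross$, this function is globally linear with some integer slope $k_0$, and $c_{k_0}\ne 0$; hence $|c_k|\rho^k\le|c_{k_0}|\rho^{k_0}$ for all $k$ and all $\rho$, and letting $\rho\to\infty$ (when $k>k_0$) or $\rho\to 0$ (when $k<k_0$) forces $c_k=0$. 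Thus $f=c_{k_0}t^{k_0}$.

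Then I would induct on $n$, the case $n=0$ being trivial. Write $f=\sum_{k\in\mathbf Z}g_k(t_2,\dots,t_n)t_1^k$ with each $g_k$ analytic on $\gm^{n-1}$. For each point $s\in\gm^{n-1}(\groundfield)$, the restriction of $f$ to the slice $\gm\times\{s\}$ is a nowhere-vanishing analytic function of $t_1$, which by the one-dimensional case is $c(s)t_1^{a(s)}$; hence for each $s$ at most one $g_k(s)$ is nonzero. If two of the $g_k$ were not identically zero, their product would not be identically zero—here I use that the ring of global analytic functions on $\gm^{n-1}$ is an integral domain (equivalently, that $\gm^{n-1}$ is connected and irreducible as a rigid space)—and so they would be simultaneously nonzero at some point, a contradiction. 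Therefore exactly one $g_k$, say $g_{a_1}$, is not identically zero, so $f=g_{a_1}(t_2,\dots,t_n)\,t_1^{a_1}$; and $g_{a_1}$ is nowhere vanishing on $\gm^{n-1}$ because $f$ is. The inductive hypothesis gives $g_{a_1}=c\,t_2^{a_2}\cdots t_n^{a_n}$, whence $f=c\,t_1^{a_1}\cdots t_n^{a_n}$.

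The main obstacle is the one-dimensional base case: it rests on the input from non-Archimedean function theory that the zeros of an analytic function on an annulus are detected exactly by the slopes of its Newton polygon. Everything after that is elementary, the only delicate point being the integral-domain property of the ring of analytic functions on $\gm^{n-1}$ used to pass from the pointwise statement to a global one in the inductive step.
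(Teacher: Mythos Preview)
Your proof is correct, and it shares the same backbone as the paper's: write $f$ as a multivariable Laurent series and reduce to the one-variable fact (via Newton/valuation polygons) that a zero-free analytic function on $\aonecross$ is a single monomial.

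The difference is in how the reduction to one variable is carried out. The paper does it in one shot: assuming two multi-indices $\mu\ne\nu$ have nonzero coefficients, it specializes all but one coordinate to values $u_1,\dots,u_{n-1}$ with $|u_j|=1$ whose reductions in the residue field are chosen generically so that no cancellation occurs among the collapsed coefficients; the resulting one-variable Laurent series then has two nonzero coefficients and hence a zero, a contradiction. You instead peel off one variable at a time by induction, expanding $f=\sum_k g_k\,t_1^k$ and using that the ring of global analytic functions on $\gm^{n-1}$ is an integral domain to conclude that at most one $g_k$ survives. Your route avoids the residue-field genericity argument at the cost of invoking integrality/irreducibility for the rigid space $\gm^{n-1}$; the paper's route avoids any structural statement about the multivariable function ring but needs the explicit non-cancellation estimate. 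Both are short and entirely adequate here.
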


\begin{proof}
Embed $T$ in affine $n$-space $\mathbf{A}^n$ in the natural way
with  affine coordinates \hbox{$z=(z_1,\dots,z_n)$}
on $\mathbf{A}^n.$
Then, $f$ can be written as a Laurent series in multi-index notation as
$$
	\sum_{\gamma\in\mathbf{Z}^n}a_\gamma z^\gamma.
$$
An easy argument involving valuation polygons shows that
there exists precisely one multi-index $\gamma$ such that
$a_\gamma\ne0,$ from which the proposition follows.
\par
Indeed, suppose there exist two multi-indices 
$$
	\mu=(\mu_1,\dots,\mu_n)\ne(\nu_1,\dots,\nu_n)=\nu
$$
with $a_\mu\ne0$ and $a_\nu\ne0.$  Then, there must be some $k$
such that $\mu_k\ne\nu_k.$  Without loss of generality
by reordering the coordinates if necessary, assume $\mu_n\ne\nu_n.$
Let $u=(u_1,\dots,u_{n-1})$ be such
that $|u_j|=1.$ Let $\tilde u = (\tilde u_1,\dots, \tilde u_{n-1})$
be the reduction of $u$ in $\mathbf{A}^{n-1}(\widetilde{\groundfield}),$ where 
$\widetilde{\groundfield}$ denotes the residue class field of $\groundfield.$
We want to make a substitution of the form 
$z_n=z$ and $z_j=u_j$ for $1\le j \le n-1$ to get a Laurent series
in one variable $z$ with at least two non-zero coefficients, but we
need to choose the $u_j$ so as not to have any accidental cancellation.
But clearly if we choose $u$ with $\tilde u$ generic, meaning that there is a 
non-zero polynomial with coefficients in $\widetilde{\groundfield}$
such that if $\tilde u$ is not in the zero locus of that polynomial,
then
$$
	\left|\sum_{\gamma \textnormal{~s.t.~}\gamma_n=\mu_n}
	\!\!\!\!\!\!a_\gamma u_1^{\gamma_1}\dots u_{n-1}^{\gamma_{n-1}}\right|
	= \sup\{|a_\gamma| : \gamma_n=\mu_n\}
	\ge |a_\mu|\ne0
$$
and
$$
	\left|\sum_{\gamma \textnormal{~s.t.~}\gamma_n=\nu_n} 
	\!\!\!\!\!\!a_\gamma u_1^{\gamma_1}\dots u_{n-1}^{\gamma_{n-1}}\right|
	= \sup\{|a_\gamma| : \gamma_n=\nu_n\}
	\ge |a_\nu|\ne0,
$$
and so we will indeed get a a one-variable Laurent series with
at least two non-zero coefficients. This contradicts
the assumption that $f$ is zero-free by the theory of valuation
polygons, and so there could have only
been one multi-index $\gamma$ with $a_\gamma\ne0.$
\end{proof}

\begin{theorem}\label{savhom}
Let $f:\aonecross\rightarrow S$ be an analytic map to a semi-Abelian
variety $S.$  Then $f$ is the translate of a group homomorphism from
$\gm$ to $S.$
\end{theorem}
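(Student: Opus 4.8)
The plan is to follow the pattern of the proof of Theorem~\ref{savhyp}: first settle the case in which the target has good reduction, and then reduce the general case to it via the semi-Abelian uniformization theorem.

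\emph{Good reduction case.} Suppose $\tilde S$ sits in an exact sequence $1\to T'\to\tilde S\to B_0\to 0$ with $T'\cong\gm^{r'}$ a split torus and $B_0$ an Abelian variety with good reduction, and let $\tilde f:\aonecross\to\tilde S$ be analytic. I claim first that the composite $g:\aonecross\to B_0$ is constant --- the analog for $\aonecross$ of the argument on page~401 of \cite{Cherry94}. Exhaust $\aonecross$ by closed annuli $\{\rho_1\le|z|\le\rho_2\}$ with $\rho_1<\rho_2$ in the value group of $\groundfield$; the canonical reduction of such an annulus is a union of two affine lines meeting at a point. Since $B_0$ has good reduction, the restriction of $g$ to such an annulus extends, by properness, to a morphism of formal models, and on special fibres this is a morphism from a union of affine lines to an Abelian variety, hence constant; so the restriction of $g$ has image in a single residue tube of $B_0$ --- an open polydisc, by smoothness --- and as the annuli exhaust $\aonecross$, all of $g(\aonecross)$ lies in one such residue tube. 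The coordinate functions of the induced map $\aonecross\to$ open polydisc are then analytic functions on $\aonecross$ of sup-norm $<1$ on every circle $|z|=\rho$, so letting $\rho\to0$ and $\rho\to\infty$ in their Laurent expansions forces them to be constant. Hence $g$ is constant, $\tilde f$ has image in a single coset of $T'$, and translating that coset onto $T'$ and applying Proposition~\ref{gmprodprop} (with $T=\gm$) to each coordinate shows $\tilde f$ to be the translate of a homomorphism $\gm\to T'\hookrightarrow\tilde S$.

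\emph{General case.} Write $S^{an}=\tilde S^{an}/M$ by the semi-Abelian uniformization theorem, with $\tilde S$ a semi-Abelian variety with good reduction and $M$ a lattice. Given analytic $f:\aonecross\to S^{an}$, I would first lift it to an analytic $\tilde f:\aonecross\to\tilde S^{an}$: over each closed annulus $X$ in an exhaustion of $\aonecross$ the map $f|_X$ lifts, a closed annulus being simply connected in the relevant sense, and since any two lifts over an overlap differ by an element of the discrete group $M$, one adjusts inductively to a global lift. By the good reduction case, $\tilde f=s_0\cdot\chi_0$ for some $s_0\in\tilde S$ and some homomorphism $\chi_0:\gm\to\tilde S$; composing with the quotient map $q:\tilde S^{an}\to S^{an}$ exhibits $f$ as the translate of the analytic group homomorphism $q\circ\chi_0:\gm\to S^{an}$, proving the theorem.

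The lifting step is, I expect, the main obstacle. Over $\mathbf C$ the analogous lift can fail, since $\mathbf C\setminus\{0\}$ is not simply connected, so one must use a genuinely non-Archimedean fact --- that a closed annulus carries no nontrivial covering of the kind appearing in $\tilde S^{an}\to S^{an}$ --- and then patch the local lifts coherently. The good reduction case, by contrast, only recombines the reduction argument of \cite{Cherry94} with the valuation-polygon bookkeeping already used in Proposition~\ref{gmprodprop}. A further point worth checking is that the conclusion really is a homomorphism: the monomials produced in the torus step assemble into an algebraic homomorphism $\gm\to\tilde S$, whose composite with the analytic quotient $q$ is the analytic group homomorphism $\gm\to S$ that the statement demands.
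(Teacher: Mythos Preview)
Your approach is correct and, as you anticipate, follows the template of Theorem~\ref{savhyp}: a good-reduction case settled by reduction plus valuation-polygon arguments, then uniformization and lifting. The paper proceeds differently. After translating so that $f(1)=1$, it invokes the \cite{Cherry94} argument only for the abelian quotient, obtaining that $\phi\circ f:\gm\to A$ is already a group homomorphism (possibly nontrivial, since $A$ need not have good reduction). It then forms the obstruction map
\[
\psi(z_1,z_2)=f(z_1z_2)\,[f(z_1)]^{-1}[f(z_2)]^{-1}
\]
from $\gm\times\gm$ to $S$; because $\phi$ and $\phi\circ f$ are homomorphisms, $\psi$ lands in the torus $T=\ker\phi$. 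Proposition~\ref{gmprodprop} (applied coordinatewise) forces $\psi$ to be a homomorphism, and since $\psi(z,1)=\psi(1,z)=1$ it is identically trivial, so $f$ itself is a homomorphism.

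Both routes rest on the same ingredients---the reduction argument of \cite{Cherry94} and Proposition~\ref{gmprodprop}---and both implicitly need a lift of a map out of $\aonecross$ through a lattice quotient (you lift $f$ through $\tilde S\to S$; the paper's citation of \cite{Cherry94} for the abelian part hides the corresponding lift through the uniformization of $A$). What the defect-map trick buys is that one never has to uniformize the full semi-Abelian $S$ or re-establish the good-reduction case for $\aonecross$: once the abelian projection is a homomorphism, the remaining obstruction is purely toric and Proposition~\ref{gmprodprop} dispatches it in one stroke. Your approach trades that economy for conceptual directness, and has the merit of making the $\aonecross$-lifting step---which you rightly flag as the crux---explicit rather than leaving it inside a citation.
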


\begin{proof}
By composing with a translation of $S,$ we may assume 
without loss of generality that
$f(1)=1.$ Let $\phi:S\rightarrow A$ be the homomorphism defining $S$
as an extension (by a multiplicative torus) of the Abelian variety $A.$
By the argument on page~401 of \cite{Cherry94}, $\phi\circ f$
is a group homomorphism from $\gm$ to $A.$
Now let \hbox{$(z_1,z_2)\in\gm\times\gm$} and consider
the analytic map $\psi$ from $\gm\times\gm$ to $S$ defined by
$$
	\psi(z_1,z_2)=f(z_1z_2)[f(z_1)]^{-1}[f(z_2)]^{-1}.
$$
Because $\phi$ and $\phi\circ f$ are group homomorphisms,
$\phi\circ\psi$ is the constant map from $\gm\times\gm$
to the identity element of $A.$  Hence $\psi$ can be thought of
as an analytic map to a multiplicative torus $T.$  It then follows
from Proposition~\ref{gmprodprop} by projecting from $T$ onto each
of its factors that $\psi$ is the translation of a group homomorphism.
But, since $\psi(1,1)=1,$ we have that $\psi$ is in fact a group
homomorphism. Clearly, \hbox{$\psi(z_1,1)=\psi(1,z_2)=1,$}
and hence $\psi$ is the constant map to the identity.  In other words,
$f$ is a group homomorphism.
\end{proof}

\begin{theorem}\label{savsubv}
Let $X\subset S$ be a closed subvariety of a semi-Abelian variety $S.$
Let \hbox{$f:\aone\rightarrow X$} be a non-constant analytic map.  Then,
the image of $f$ is contained in the translate of a non-trivial
semi-Abelian subvariety of $S$ contained in $X.$
\end{theorem}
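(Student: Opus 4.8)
The plan is to reduce to the known Bloch-type statement (Theorem~\ref{savhyp}, Corollary~\ref{savdegen}) by studying the smallest semi-Abelian subvariety that accounts for the image of $f$. First I would consider the Zariski closure $Y$ of the image of $f$; this is an irreducible closed subvariety of $S$, and since $f$ is non-constant, $\dim Y \ge 1$. The goal is to show that $Y$ is a translate of a semi-Abelian subvariety of $S$ (necessarily contained in $X$, since $Y\subseteq X$). After translating so that the image of $f$ passes through the identity, it suffices to produce a non-trivial semi-Abelian subvariety $S'\subseteq S$ containing the image of $f$, because then $Y\subseteq S'$ and one can induct on dimension: replacing $S$ by $S'$, either $Y=S'$ and we are done, or $\dim S' < \dim S$ and we repeat. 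So the crux is: \emph{a non-constant analytic map $f:\aone\to S$ forces the existence of a proper semi-Abelian subvariety containing its image, or else the image is dense in $S$ and $S$ itself is the answer} — but density in $S$ cannot happen by the standard structure argument unless $\dim S \le $ something, so the real content is the stabilizer/quotient argument below.

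The key device is the following. Let $S_0$ be the connected component of the stabilizer of $Y$ under translation, i.e. $S_0 = \{s\in S : s + Y = Y\}^\circ$; this is a semi-Abelian subvariety of $S$. Form the quotient $S/S_0$, which is again a semi-Abelian variety (quotients of semi-Abelian varieties by semi-Abelian subvarieties are semi-Abelian), and let $\pi: S\to S/S_0$ be the quotient morphism. Then $\pi(Y)$ is a closed subvariety of $S/S_0$ whose translation stabilizer is trivial. The composition $\pi\circ f: \aone \to S/S_0$ is an analytic map to a semi-Abelian variety, hence constant by Theorem~\ref{savhyp}. Therefore the image of $f$ lies in a single fiber of $\pi$, i.e. in a translate of $S_0$; after our normalization that the image passes through the identity, the image lies in $S_0$ itself. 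If $S_0 = S$, then $Y$ is stabilized by all of $S$, which (since $Y$ is a nonempty proper-or-not closed subset) forces $Y = S$ and we take $S' = S$; otherwise $S_0$ is a proper semi-Abelian subvariety containing the image of $f$, and we recurse on $S_0$. Since dimensions strictly decrease, the recursion terminates, yielding a semi-Abelian subvariety $S'$ with image$(f)\subseteq S'$ and with $S'$-stabilizer of $\overline{\mathrm{image}(f)}$ equal to all of $S'$, hence $\overline{\mathrm{image}(f)}$ is a translate of $S'$. Non-triviality of $S'$ follows because $f$ is non-constant.

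The main obstacle I anticipate is purely the bookkeeping around the stabilizer construction in the semi-Abelian (non-complete) setting: one needs that the translation stabilizer of a closed subvariety of a semi-Abelian variety is a closed subgroup scheme whose identity component is again semi-Abelian, and that the quotient $S/S_0$ exists as a semi-Abelian variety with $\pi$ a morphism of algebraic groups. These are standard (they hold for group varieties acting on varieties, and for semi-Abelian varieties the quotient structure is classical, e.g. via \cite{LangAV} together with the theory of extensions of Abelian varieties by tori), but in positive characteristic one should be slightly careful to pass to the reduced connected component so that $S_0$ is genuinely a semi-Abelian \emph{variety} and not merely a group scheme. A second, minor point is checking the base case of the recursion: when $S_0 = S$ one must confirm $Y = S$, which is immediate since a closed subvariety invariant under all translations of a connected group and containing a point must be the whole group. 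Everything else reduces to a single application of Theorem~\ref{savhyp}.
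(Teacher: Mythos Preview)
There is a real gap here, though it originates in a typo in the paper's statement. As written with domain $\aone$, the theorem is vacuous: applying Theorem~\ref{savhyp} directly to the composite $\aone\to X\hookrightarrow S$ already forces $f$ to be constant, so no non-constant $f$ exists and there is nothing to prove. You never observe this; your stabilizer--quotient recursion, while formally valid, is elaborate machinery deployed on an empty statement.

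The paper's own proof (which invokes Theorem~\ref{savhom}) and the application in Corollary~\ref{blochcor} make clear that the intended domain is $\aonecross$, not $\aone$. For that corrected statement your argument breaks at its key step: $\pi\circ f:\aonecross\to S/S_0$ is an analytic map from $\aonecross$ to a semi-Abelian variety, and Theorem~\ref{savhyp} says nothing about such maps---they are certainly not constant in general (take the identity $\gm\to\gm$). The paper instead argues in one stroke: by Theorem~\ref{savhom}, $f$ is the translate of a group homomorphism $\gm\to S$, and the Zariski closure of the image of a homomorphism of algebraic groups is a closed subgroup \cite[p.~84]{LangHyp}, hence a semi-Abelian subvariety; thus the Zariski closure of the image of $f$ is already the desired translate. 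Your framework can be repaired for $\aonecross$ by replacing the appeal to Theorem~\ref{savhyp} with Theorem~\ref{savhom} and then noting that the resulting one-parameter subgroup in $S/S_0$ must be trivial because $\pi(Y)$ has finite stabilizer---but this repair already uses the same closure-of-homomorphism fact and is strictly longer than the paper's direct route.
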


\begin{proof}
By Theorem~\ref{savhom}, the map $f$ is the translate of a group
homomorphism.  Thus, by \cite[p.~84]{LangHyp},
the Zariski closure of the image of $f$ is 
the translate of a subgroup of $S.$
\end{proof}

\begin{corollary}[Non-Archimedean Bloch theorem \cite{Cherry94}]
\label{blochcor}
If $X$ is a non-singular projective variety whose Albanese variety
has dimension larger than $\dim X,$ then any analytic map from
$\aonecross$ to $X$ is algebraically degenerate.
\end{corollary}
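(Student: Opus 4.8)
The plan is to reduce the statement about an arbitrary analytic map from $\aonecross$ to a statement about maps into a semi-Abelian variety, so that Theorem~\ref{savsubv} applies. Let $f:\aonecross\to X$ be analytic, and suppose for contradiction that $f$ has Zariski-dense image. Let $\mathrm{alb}:X\to A$ be the Albanese morphism, which is a genuine morphism since $X$ is non-singular (see \cite[Ch.~2]{LangAV}). Then $\mathrm{alb}\circ f:\aonecross\to A$ is an analytic map into an Abelian variety, hence in particular into a semi-Abelian variety, so by Theorem~\ref{savhom} it is the translate of a group homomorphism $\gm\to A$; in particular its image is a translate of an Abelian subvariety $B\subseteq A$, and this $B$ must be all of $A$ because $f$ has dense image and the Albanese map has dense image, so $\dim B=\dim A$.

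Now I would invoke the universal property of the Albanese variety: since the image of $f$ is Zariski dense and its image under $\mathrm{alb}$ is a \emph{translate} of $A$, the composite $\mathrm{alb}\circ f$ is, up to translation, a surjective homomorphism $\gm\to A$. But a surjective homomorphism of algebraic groups cannot increase... rather, its source $\gm$ has dimension $1$, while $\dim A>\dim X\ge 1$ by hypothesis, and the image of a $1$-dimensional group under a homomorphism has dimension at most $1$. Hence $\dim A\le 1$, contradicting $\dim A>\dim X\ge 1$ unless $\dim X=0$, in which case $X$ is a point and every map to it is constant, hence (vacuously or by convention) algebraically degenerate. Either way we reach a contradiction with the assumption that $f$ is algebraically non-degenerate.

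Let me restate this more carefully, since the step I expect to be the real (though mild) obstacle is making precise why density of the image of $f$ forces the translate of $\mathrm{alb}\circ f$ to hit a subgroup of dimension equal to $\dim A$. The cleanest route: by Theorem~\ref{savhom} applied to $\mathrm{alb}\circ f$ (after translating so it sends $1$ to the identity), the Zariski closure of the image of $\mathrm{alb}\circ f$ is a translate of a connected algebraic subgroup $B\subseteq A$ of dimension $\le 1$. On the other hand, if $f$ had dense image in $X$, then $\mathrm{alb}\circ f$ would have image dense in $\mathrm{alb}(X)$, and by the defining universal property of the Albanese variety, $\mathrm{alb}(X)$ generates $A$ as a group; a translate of $\mathrm{alb}(X)$ therefore cannot be contained in a coset of a proper connected subgroup of positive codimension once $\dim A\ge 2$. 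Combining, $\dim A\le 1<\dim A$, a contradiction. Hence $f$ must be algebraically degenerate.

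One packaging remark I would include: Corollaries~\ref{savdegen} and~\ref{ratabdegen} handle the analogous $\aone$ statement, and the present corollary is exactly the $\aonecross$ (equivalently $\gm$) counterpart, with Theorem~\ref{savhom} playing the role that Theorem~\ref{savhyp} played there — the only genuine difference being that a nonconstant map $\gm\to A$ is allowed, so one extracts a contradiction from the dimension count $\dim A>\dim X$ rather than from outright constancy. I would also note that, as in the proof of Corollary~\ref{ratabdegen}, one should remark that since $X$ is projective the morphism $\mathrm{alb}\circ f$ is automatically an honest analytic map (no indeterminacy issues arise because $\mathrm{alb}$ is a morphism on the non-singular $X$), so no ad hoc argument about clearing common factors is needed here.
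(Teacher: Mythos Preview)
Your overall strategy---compose with the Albanese morphism and exploit that $\mathrm{alb}(X)$ generates $A$---is exactly the paper's. But there is a real gap: you assert that the Zariski closure $B$ of the image of the analytic homomorphism $\gm\to A$ furnished by Theorem~\ref{savhom} has $\dim B\le 1$, reasoning that ``the image of a $1$-dimensional group under a homomorphism has dimension at most $1$.'' That is true for \emph{algebraic} homomorphisms, but Theorem~\ref{savhom} only gives an \emph{analytic} one, and for those the bound fails. For instance, if $E_i=\gm/q_i^{\mathbf{Z}}$ ($i=1,2$) are two non-isogenous Tate elliptic curves, the analytic homomorphism $z\mapsto([z],[z])$ from $\gm$ into $E_1\times E_2$ has Zariski-dense image, so here $\dim B=2$.

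The repair is immediate and is precisely the paper's route via Theorem~\ref{savsubv} (the same argument reappears in the proof of Theorem~\ref{aonecrossthm}). You never need $\dim B\le 1$. From your own steps, the Zariski closure of the image of $\mathrm{alb}\circ f$ is a translate of a subgroup and, under the non-degeneracy assumption on $f$, equals $\mathrm{alb}(X)$; since $\mathrm{alb}(X)$ generates $A$, it cannot be a coset of a proper subgroup, so $B=A$ and hence $\mathrm{alb}(X)=A$. But $\dim\mathrm{alb}(X)\le\dim X<\dim A$, a contradiction. Equivalently, apply Theorem~\ref{savsubv} directly to $\mathrm{alb}\circ f$ landing in the subvariety $\mathrm{alb}(X)\subset A$: the image lies in a translate of a semi-Abelian subvariety contained in $\mathrm{alb}(X)$, and since $\mathrm{alb}(X)$ generates $A$ yet has smaller dimension, it is not itself such a translate, so that translate is a proper subvariety. (Minor aside: your first paragraph also asserts that the Albanese map has dense image; it does not, for the same dimension reason, though your later restatement avoids this slip.)
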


\begin{remark*} In \cite{Cherry94}, Corollary~\ref{blochcor}
was incorrectly stated in terms of the irregularity
$q=\dim H^1(X,\mathcal{O}_X)$ rather than the dimension of the
Albanese variety.  In characteristic zero, both numbers are equal,
but an example of Igusa \cite{Igusa} shows that in positive
characteristic the dimension of the Albanese variety can be smaller
than the dimension of the space of regular $1$-forms.
\end{remark*}

\section{Algebraic Degeneracy of non-Archimedean Analytic Maps
Omitting Sufficiently Many Divisors}

In this section we will develop the non-Archimedean analogs
of the work of Noguchi and Winkelmann \cite{NoguchiWinkelmann}.
\par
Let $X$ be a projective variety non-singular in codimension one.
Recall that the space $\Pic(X)$ classifies
Cartier divisors on $X$ up to linear equivalence.  
Those divisor classes in $\Pic(X)$ which are algebraically
equivalent to zero are denoted by $\Pic^0(X),$
and $\Pic^0(X)$ is an Abelian variety, known as the Picard variety
which is dual to the Albanese variety, see \textit{e.g.,} \cite{LangAV}.
The quotient $\Pic(X)/\Picz(X)$ is a finitely generated group
called the N\'eron-Severi group of $X$ and denoted $\NS(X).$
Finite generation of $\NS(X)$ is a theorem of Severi in characteristic
zero and N\'eron in positive characteristic.  It also follows from the
Lang-N\'eron theorem, see \textit{e.g.,} \cite{LangFDG},
or by \'etale cohomology, see \textit{e.g.,} \cite{Milne}.
We will refer to the canonical image of a divisor $D$ in $\NS(X)$
as the \textit{Chern class} of the divisor and denote it by $c_1(D).$
In characteristic zero when $X$ is non-singular,
this agrees with the classical notion of
Chern class in $H^2(X,\mathbf{Z})$ coming from the exponential sheaf
sequence.  In positive characteristic, one can embed $\NS(X)$
in an \'etale cohomology group, see \textit{e.g.,} \cite{Milne},
and think of $c_1$ that way. For
us, the cohomological interpretation will not be important, so we prefer
to simply think of $c_1$ as a homomorphism from divisors to 
$\NS(X).$
\par
If $\iota:Y\rightarrow X$ is a morphism (or more generally
a rational map) from a projective variety $Y,$
non-singular in codimension one, to a non-singular projective variety
$X,$ then if $D$ is
in $\Picz(X),$ then $\iota^*D$ is in $\Picz(Y)$ by
\cite[Ch.~V, Prop.~1]{LangAV}. Hence, the pull-back map on divisor classes
$\iota^*:\Pic(X)\rightarrow\Pic(Y)$ induces a homomorphism
$\iota^*:\NS(X)\rightarrow\NS(Y).$
\par
We begin by discussing analytic maps from $\aone$ omitting sufficiently
many divisors relative to the size of the group generated by their 
Chern classes.

\begin{theorem}\label{aonethm}
Let $Y$ be a possibly singular projective variety
and let $\iota:Y\rightarrow X$ be a morphism to a smooth projective
variety $X.$
Let $\{D_i\}_{i=1}^\ell$ be $\ell$ irreducible effective divisors on $X$
such that $\{\iota^* D_i\}_{i=1}^\ell$ form $\ell$ distinct effective
Cartier divisors on $Y.$ 
Assume the number of irreducible components $\ell$ is larger than 
the rank of the subgroup generated by the $c_1(D_i)$ in 
$\NS(X).$  Then, any analytic map from 
$\aone$ to $Y$ is either algebraically degenerate or
intersects the support of at least one of the $\iota^*D_i.$
\end{theorem}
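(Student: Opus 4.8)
The plan is to reduce the statement to Corollary~\ref{savdegen} by constructing a non-constant morphism from (an appropriate modification of) $Y$ to a semi-Abelian variety, built out of the line bundles associated to the $D_i$. The key observation is that if $f:\aone\to Y$ avoids all the supports of the $\iota^*D_i$, then each pullback $f^*\iota^*\mathcal{O}_X(D_i)$ is a line bundle on $\aone$ together with a nowhere-vanishing section, hence trivial with a specified trivializing section; the data of how these trivializations interact will force linear relations among the $c_1(D_i)$ that, combined with $\ell$ exceeding the rank, produce a genuine semi-Abelian quotient that $f$ cannot map to non-trivially unless it is degenerate.

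**More concretely, first** I would set $L_i = \iota^*\mathcal{O}_X(D_i)$, a line bundle on $Y$ with a section $s_i$ whose divisor is $\iota^*D_i$. Since $\ell$ exceeds the rank $r$ of the group generated by the $c_1(D_i)$ in $\NS(X)$, and $\iota^*:\NS(X)\to\NS(Y)$ is a homomorphism, there exist integers $n_1,\dots,n_\ell$, not all zero, with $\sum n_i\, c_1(\iota^*D_i) = 0$ in $\NS(Y)$ — in fact one gets $\ell - r' \ge \ell - r \ge 1$ independent such relations, where $r'$ is the rank in $\NS(Y)$. Thus $P := \bigotimes L_i^{\otimes n_i}$ lies in $\Pic^0(Y)$ (after passing to a desingularization, or working with $Y$ non-singular in codimension one as in the section's setup), and the classifying map to the Picard variety, together with the torus coordinates coming from the sections, will be the source of the morphism to a semi-Abelian variety.

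**Then** I would make this precise by invoking the standard construction (as in \cite{LangAV}) of the semi-Abelian variety associated to the data: a torus $T\cong\gm^{\ell-r'}$ quotient recording the independent linear relations among the $L_i$, extending $\Pic^0(Y)$, together with a canonical morphism $\pi$ from $Y$ minus the union of the supports of the $\iota^*D_i$ into this semi-Abelian variety $S$. Composing with $f$ (which lands in that open set by hypothesis) gives an analytic map $\aone\to S$, which is constant by Theorem~\ref{savhyp}. If this forces $\pi$ itself to be constant along the image of $f$, one unwinds that the image of $f$ lies in a fiber of a non-constant rational map — here one must check the morphism to $S$ is non-constant on $Y$, or else the $\iota^*D_i$ would already be too special — and concludes algebraic degeneracy via Corollary~\ref{savdegen}.

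**The main obstacle** I anticipate is the construction and non-constancy of the morphism to the semi-Abelian variety when $Y$ is singular: one needs the sections $s_i$ to behave well, one needs $\iota^*D_i$ to be genuine Cartier divisors (which is assumed), and one needs to ensure that the relations in $\NS(Y)$ actually yield a torus of positive rank mapping non-trivially — the subtle point being that a relation $\sum n_i c_1(\iota^*D_i)=0$ only gives an element of $\Pic^0(Y)$, and to get a $\gm$-factor one must twist by that Picard-variety element and check the resulting extension is non-split in a way that sees $f$. Handling the passage between $Y$ and a resolution, and verifying that algebraic degeneracy upstairs descends, is the part I expect to require the most care.
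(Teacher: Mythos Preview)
Your overall instinct---use the rank hypothesis to get a relation among the $c_1(D_i)$, then build a morphism to a semi-Abelian variety and invoke Theorem~\ref{savhyp}---is sound, and is in fact essentially the strategy the paper deploys for the harder $\aonecross$ case (Theorem~\ref{aonecrossthm}). But there are two issues worth flagging.

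First, the specific semi-Abelian construction you sketch is muddled. You write that $S$ is ``a torus \dots\ extending $\Pic^0(Y)$,'' and speak of a ``classifying map to the Picard variety.'' But an element $P\in\Pic^0(Y)$ is a single point of the Picard variety, not a morphism $Y\to\Pic^0(Y)$; and the quasi-Albanese variety one actually wants (as in \cite{Serre58a,Serre58b}) is an extension of the \emph{Albanese} of $\widetilde{Y}$ by a torus, not of the Picard variety. This is not fatal---the correct construction exists and does the job---but as written your second and third paragraphs do not describe a well-defined map.

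Second, and more to the point, the paper's argument for the $\aone$ case is considerably simpler than the quasi-Albanese route and sidesteps the obstacle you anticipate. After passing to the normalization $\widetilde{Y}$ and lifting $f$, the paper argues by dichotomy: either $\widetilde{Y}$ admits a non-constant rational map to an Abelian variety, in which case Corollary~\ref{ratabdegen} already gives degeneracy; or the Albanese of $\widetilde{Y}$ is trivial, whence by duality $\Pic^0(\widetilde{Y})$ is trivial, so algebraic equivalence coincides with linear equivalence. In the second case the relation $\sum a_i\,\tilde\iota^*D_i\sim_{\mathrm{alg}}0$ upgrades for free to $\sum a_i\,\tilde\iota^*D_i=\mathrm{div}(h)$ for a non-constant rational function $h$, and then $h\circ\tilde f:\aone\to\aonecross$ is constant. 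No semi-Abelian extension, no non-constancy check, no delicate handling of singularities beyond normalization. Your approach buys generality (it is what one needs when the source is $\aonecross$ and mere constancy of maps to $\gm$ is unavailable), but for $\aone$ the dichotomy trick is both shorter and cleaner.
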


Any $r$ algebraically independent entire functions form
an algebraically non-degenerate analytic map from $\aone$
to $Y=X=(\mathbf{P}^1)^r$ that omits the $r$ divisors defined by 
taking the point at $\infty$ on one of the $\mathbf{P}^1$ factors
and thus show the theorem is optimal in its dependence on
the rank of the group generated by the $c_1(D_i).$

Typically what we have in mind for $Y$ is 
a closed subvariety of $X.$  In that case the map $\iota$ is
the inclusion in $X,$
and $\iota^* D_i$ is set-theoretically $D_i\cap Y.$

Notice that unlike 
\cite{NoguchiWinkelmann}, we do not need to make any kind of general
position assumption on the $D_i,$ other than that the
$\iota^*D_i$ are distinct.

In the case that $Y=X=\mathbf{P}^n,$ we recover the well-known trivial fact
that a non-Archimedean analytic map from $\aone$ to $\mathbf{P}^n$
that omits two distinct hypersurfaces is algebraically degenerate.

\begin{proof}
Let $f:\aone\rightarrow Y$ be an algebraically non-degenerate
analytic map.
\par
Let $\widetilde{Y}$ be the normalization of $Y,$ which of course is
non-singular in codimension one.  Let
$\tilde \iota:\widetilde{Y}\rightarrow X$ denote the composition of the
natural map from $\widetilde{Y}$ to $Y$ with
$\iota:Y\rightarrow X.$
\par
If $f$ is not algebraically degenerate, then $Y$ is not contained
in the indeterminacy locus of the rational map from $Y$
to $\widetilde{Y},$ and hence lifts (as in the proof of
Corollary~\ref{ratabdegen}) to an analytic map
$\tilde f$ from $\aone$ to $\widetilde{Y}$
\par
By our assumption that there are more components $D_i$ than the rank
of the group the $c_1(D_i)$
generate in $\NS(X),$ we can find integers $a_i$ not
all zero so that $\sum a_i c_1(D_i)=0.$ Thus, 
\hbox{$\sum a_i c_1(\tilde\iota^*D_i)
= \tilde\iota^*\left(\sum a_i c_1(D_i)\right)=0$}
in $\NS(\widetilde{Y}),$
and thus $\sum a_i \tilde\iota^*D_i$  is algebraically equivalent to
zero on $\widetilde{Y}.$  Because $\widetilde{Y}$ maps onto $Y,$
by our assumption that the $\iota^*D_i$ are distinct, we also have
that the $\tilde\iota^*D_i$ are distinct.
Also, because not all the $a_i$ are zero,
we conclude that $\sum a_i \tilde\iota^*D_i$ is not the zero divisor on 
$\widetilde Y.$
\par
If there is a non-constant rational map from $Y$
to an Abelian variety, then
$f$ is already algebraically degenerate by Corollary~\ref{ratabdegen}.
Thus, without loss of generality, we may assume there are no non-constant
rational maps from $\widetilde{Y}$ to Abelian varieties, or in other words
that the Albanese variety of $\widetilde{Y}$ is trivial.
Because the Picard variety $\mathrm{Pic}^0(\widetilde{Y})$
is Cartier dual to the Albanese variety, $\mathrm{Pic}^0(\widetilde{Y})$
is also trivial. 
But $\mathrm{Pic}^0(\widetilde{Y})$ is precisely the set of divisors
algebraically equivalent to zero modulo those divisors linearly equivalent
to zero. Hence, every divisor algebraically equivalent to zero on
$\widetilde{Y}$ is also linearly equivalent to zero.  Thus, we can find
a non-constant rational function $h$ on $\widetilde{Y}$ such that
$$
	\mathrm{div}(h)=\sum a_i \tilde\iota^* D_i.
$$

If $f$  omits the supports of all the $\iota^*D_i,$ then its lift
$\tilde f: \aone\rightarrow \tilde Y$ is an  
analytic map  omitting the supports of all the $\tilde\iota^*D_i.$
Then, $h\circ \tilde f$ is an analytic map from $\aone$ to
$\aonecross,$ and hence constant.
Thus, $\tilde f$ is algebraically degenerate and so is  $f$.
\end{proof}

Next, we recall that a collection of irreducible effective ample divisors
$D_i$ in a non-singular projective variety $X$ of dimension $m$ 
are said to be \textit{in general position} if for each 
\hbox{$1\le k \le m+1$} and each choice of indices
\hbox{$i_1<\dots<i_k,$} each irreducible component of
$$
	D_{i_1}\cap\dots\cap D_{i_k}
$$
has codimension $k$ in $X,$ so in particular is empty when $k=m+1.$

\begin{corollary}\label{degencor}
Let $Y$ be a closed positive dimensional 
subvariety of a non-singular projective
variety $X.$  Let $\{D_i\}_{i=1}^\ell$ be $\ell$ irreducible,
effective, ample divisors in general position on $X.$ 
Let $r$ be the rank of the subgroup of $\NS(X)$
generated by $\{c_1(D_i)\}_{i=1}^\ell.$
If there exists an algebraically non-degenerate analytic map
from $\aone$ to $Y$ omitting each of the $D_i$ that does not
contain all of $Y,$
then 
$$
	\ell\le\max\left\{r+\codim Y, r\cdot\frac{\dim X}{\dim Y}\right\}.
$$
\end{corollary}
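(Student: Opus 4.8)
The plan is to reduce Corollary~\ref{degencor} to Theorem~\ref{aonethm} by a counting argument that handles the $D_i$ which meet $Y$ and those which do not. First I would split the index set $\{1,\dots,\ell\}$ into $I_0 = \{i : Y \subset D_i \text{ or } Y \cap D_i = \emptyset\}$ and its complement; actually, since $f$ omits each $D_i$ that does not contain $Y$, and $f$ is algebraically non-degenerate, $f$ cannot map into any $D_i$, so in fact no $D_i$ contains $Y$, and $f$ omits every $D_i$. Set $\iota : Y \hookrightarrow X$ to be the inclusion, so $\iota^* D_i$ is, set-theoretically, $D_i \cap Y$. The first thing to check is that the $\iota^* D_i$ are \emph{distinct} Cartier divisors on $Y$ (after passing to the normalization $\widetilde Y$ if necessary, exactly as in the proof of Theorem~\ref{aonethm}): this is where the general position hypothesis enters. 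If $\dim Y = n$, then for any two indices $i \ne j$, the intersection $D_i \cap D_j \cap Y$ has codimension at least $2$ inside $Y$ (by general position $D_i \cap D_j$ has codimension $2$ in $X$, and $Y$ meets it properly — here one uses ampleness to ensure $Y \not\subset D_i \cap D_j$ and that the intersection with $Y$ is nonempty of the expected dimension), while $D_i \cap Y$ and $D_j \cap Y$ are each of pure codimension $1$ in $Y$; hence they cannot coincide. So $\{\iota^* D_i\}$ are $\ell$ distinct effective Cartier divisors on $\widetilde Y$.

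Next I would invoke Theorem~\ref{aonethm} in contrapositive form: since there exists an algebraically non-degenerate analytic map $f : \aone \to Y$ omitting all the $D_i$, we must have $\ell \le \rho$, where $\rho$ is the rank of the subgroup of $\NS(\widetilde Y)$ generated by $\{c_1(\iota^* D_i)\}$. The remaining work is to bound $\rho$ in terms of $r$, $\dim X$, and $\dim Y$. There are two regimes. When $\dim Y = \dim X - 1$, i.e. $Y$ is a divisor, one expects $\rho \le r + 1 = r + \codim Y$: the pullback map $\NS(X) \to \NS(\widetilde Y)$ has image of rank $\ge$ (rank of the image of $\langle c_1(D_i)\rangle$), and the failure of injectivity of $\iota^*$ on this subgroup is controlled by how many independent relations can be \emph{created} upon restriction to $Y$; for a very ample divisor $Y$ this is at most the codimension by a Lefschetz-type argument, or more elementarily by the fact that $\iota^* c_1(D_i) = 0$ forces $D_i \cap Y$ to be a principal divisor, and the number of $D_i$ with $D_i|_Y$ linearly equivalent but not equal is again bounded by the intersection-theoretic genericity. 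More cleanly: the relevant bound is $\rho = $ rank of $\{c_1(D_i)|_Y\}$, and since the $c_1(D_i)$ span a rank-$r$ subgroup of $\NS(X)$, the restricted classes span a subgroup of rank at most $r$ \emph{unless} the restriction map introduces relations, which it does not on an ample restriction in the divisor case beyond codimension-many. I would organize this as: $\rho \le r + \codim Y$ always (the kernel of $\NS(X) \to \NS(\widetilde Y)$ restricted to the ample cone is small), giving $\ell \le r + \codim Y$.

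For the second regime I would use the numerical/intersection-theoretic bound that drives the $\dim X / \dim Y$ term. The idea: if $D_{i_1}, \dots, D_{i_k}$ have Chern classes spanning a subgroup containing a relation $\sum a_j c_1(D_{i_j}) = 0$ on $Y$, then by general position and ampleness the iterated intersections $D_{i_1} \cap \dots \cap D_{i_t} \cap Y$ are nonempty for $t \le \dim Y$ and of codimension $t$; taking $\dim Y + 1$ of the $D_i$ would force an empty intersection inside $Y$, which combined with ampleness (each $D_i$ is ample, so restricts to ample on $Y$, and finitely many ample divisors in general position cannot all be omitted by a non-constant map when there are more than $\dim$-many of them spanning few classes) yields the packing bound $\ell \le r \cdot \dim X / \dim Y$. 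Concretely, I would group the $\ell$ divisors according to their images in $\NS(\widetilde Y) \otimes \mathbf{Q}$: among any $r+1$ of them there is a $\mathbf{Q}$-linear relation, hence a rational function realizing it on $\widetilde Y$ (using that one may reduce, as in Theorem~\ref{aonethm}, to the case of trivial Albanese/Picard variety, so algebraic equivalence equals linear equivalence), and the existence of the omitting map $f$ forces, via the valuation-polygon/units argument, a constraint whose quantitative form is $\dim Y \cdot \ell \le r \cdot \dim X$.

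The main obstacle I expect is the passage from "there is a $\mathbf{Q}$-relation among the Chern classes on $Y$" to the \emph{sharp} constant $\dim X / \dim Y$: bookkeeping the general-position intersections of the $D_i$ restricted to $Y$ and extracting the right number of independent principal divisors requires care, and one must be sure the normalization step and the reduction to trivial Picard variety do not degrade the ranks. I would handle it by mimicking the Noguchi–Winkelmann counting: partition indices into blocks on which the restricted Chern classes are proportional, show each block has size at most $\dim X / \dim Y$ by an iterated-hyperplane-section (Bertini-type, valid in all characteristics for the \emph{numerical} statement) argument using ampleness and general position, and sum over the at most $r$ blocks. Combining the two regimes gives $\ell \le \max\{r + \codim Y,\ r \cdot \dim X / \dim Y\}$, as claimed.
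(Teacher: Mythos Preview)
Your proposal has a genuine gap at the very first step, and this gap is exactly where the content of the corollary lives.

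You assert that no $D_i$ contains $Y$ and that the restrictions $\iota^*D_i$ are all distinct. Neither follows from the hypotheses. The map $f$ is algebraically non-degenerate \emph{in $Y$}, which only means its image is not contained in a proper subvariety of $Y$; it says nothing against $Y\subset D_i$. Likewise, ``general position'' is a condition on how the $D_i$ meet \emph{each other} in $X$, not on how they meet $Y$: it is perfectly possible that $D_i\cap Y = D_j\cap Y$ as divisors on $Y$ for $i\ne j$ (take $Y$ a line in $\mathbf{P}^2$ and two conics through the same two points of $Y$), or that $Y\subset D_i$ (take $Y$ a line in $\mathbf{P}^3$ and $D_1$ a plane through it, with the remaining $D_i$ generic). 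If the $\iota^*D_i$ really were $\ell$ distinct Cartier divisors, Theorem~\ref{aonethm} would give $\ell\le r$ outright, which is strictly stronger than the corollary; so the corollary's whole point is to handle the collisions you have assumed away.

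The paper's proof proceeds by \emph{allowing} these collisions and counting them. One sets $\ell_0$ to be the number of \emph{distinct} divisors among the $D_i\cap Y$ with $D_i\not\supset Y$; Theorem~\ref{aonethm} then gives only $\ell_0\le r$. The passage from $\ell_0$ to $\ell$ is a combinatorial argument: let $s_j$ be the number of $D_i$ whose restriction equals the $j$-th distinct one, and let $s_0$ count those $D_i$ containing $Y$. General position on $X$ bounds $\sum_{j=0}^{n} s_j\le\dim X$ (for the $n=\dim Y$ largest $s_j$), and an averaging inequality then yields $\ell=\sum_{j=0}^{\ell_0} s_j\le (\ell_0/n)\dim X\le (r/n)\dim X$ in the case $\ell_0>n$, while the case $\ell_0\le n$ gives $\ell\le r+\codim Y$ directly from the general-position dimension count. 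Your attempt to bound a rank $\rho$ in $\NS(\widetilde Y)$ via Lefschetz-type arguments, and your block partition by ``proportional Chern classes'' with a uniform block-size bound $\dim X/\dim Y$, are not what is needed: the partition is by equality of restricted \emph{divisors}, and the block sizes are controlled only on average, not individually.
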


When $r=1,$ the term $r+\codim Y=1+\codim Y$ is largest, and the
example of $Y$ a linear subspace of $X=\mathbf{P}^n$ shows the inequality
is optimal.  We do not have examples to show optimality when $r>1,$
and we suspect the inequality may not be optimal in that case.
When $Y\subset X=\mathbf{P}^n,$ Corollary~\ref{degencor} was proven
by An, Wang, and Wong \cite{AnWangWong}.

\begin{corollary}\label{hypcor}
Let $X$ be a non-singular projective
variety.  Let $\{D_i\}_{i=1}^\ell$ be $\ell$ irreducible,
effective, ample divisors in general position on $X.$ 
Let $r$ be the rank of the subgroup of $\NS(X)$
generated by $\{c_1(D_i)\}_{i=1}^\ell.$
Let $f$ be an analytic map from $\aone$ to $X$ omitting each of the $D_i.$
Then the image of $f$ is contained in an algebraic subvariety
$Y$ of $X$ such that
$$
\dim Y\le \max\{r+\dim X-\ell,
\frac{r\cdot\dim X}{\ell}\}.
$$
In particular, if
$$
	\ell\ge\max\{r+\dim X,
r\cdot \dim X +1\},
$$
then $f$ is constant.
\end{corollary}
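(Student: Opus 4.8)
The plan is to take $Y$ to be the Zariski closure of the image of $f$ in $X$ and then read off the bound from Corollary~\ref{degencor}. By the very definition of the closure, $f$, regarded as a map to $Y$, is an algebraically non-degenerate analytic map from $\aone$ to $Y$, so everything reduces to bounding $\dim Y$. The first point to record is that none of the $D_i$ contains $Y$: since $f$ omits $D_i$, the image of $f$ is disjoint from the support of $D_i$, and hence its Zariski closure $Y$ cannot be contained in $D_i$. Consequently \emph{every} one of the $\ell$ divisors $D_i$ is among those ``not containing all of $Y$'' that appear in the hypothesis of Corollary~\ref{degencor}, and $f$ omits all of them.

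If $\dim Y>0$, Corollary~\ref{degencor} applies to this $Y$ (the rank $r$ occurring there being exactly the rank of the subgroup of $\NS(X)$ generated by the $c_1(D_i)$, i.e.\ the same $r$ as in the present statement) and gives
$$
\ell\le\max\left\{r+\codim Y,\ r\cdot\frac{\dim X}{\dim Y}\right\}.
$$
Substituting $\codim Y=\dim X-\dim Y$ and treating the two terms of the maximum separately, this inequality rearranges — in either case — to $\dim Y\le\max\{r+\dim X-\ell,\ r\dim X/\ell\}$, which is the asserted bound. If instead $\dim Y=0$, then $f$ is constant; since each $D_i$ is ample we have $c_1(D_i)\neq 0$ and hence $r\ge 1$, so the quantity $r\dim X/\ell$ is strictly positive and $\dim Y=0$ satisfies the bound trivially. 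This proves the first assertion. For the ``in particular'' clause, assume $\ell\ge\max\{r+\dim X,\ r\dim X+1\}$. Then $r+\dim X-\ell\le 0$ and $r\dim X/\ell<1$, so the bound just established forces $\dim Y<1$; since $\dim Y$ is a non-negative integer this means $\dim Y=0$, i.e.\ $f$ is constant.

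I do not expect a serious obstacle: once Corollary~\ref{degencor} is available, the statement is a formal consequence together with elementary arithmetic. The only two places that call for a little care are the degenerate case $\dim Y=0$, where Corollary~\ref{degencor} does not literally apply but the conclusion is immediate from $r\ge 1$, and the verification that the non-degenerate map $f\colon\aone\to Y$ omits precisely (indeed all of) the $D_i$ not containing $Y$, which is exactly what the ``$Y$ is the Zariski closure of the image'' observation supplies.
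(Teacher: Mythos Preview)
Your proposal is correct and is exactly the argument the paper has in mind: the paper gives no separate proof of Corollary~\ref{hypcor}, treating it as an immediate consequence of Corollary~\ref{degencor} obtained by taking $Y$ to be the Zariski closure of the image of $f$ and rearranging the inequality. Your extra care with the boundary case $\dim Y=0$ and the verification that no $D_i$ contains $Y$ is appropriate and fills in details the paper leaves implicit.
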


Note that when $Y=X=\mathbf{P}^n,$ the fact that an analytic 
map from $\aone$ 
omitting $n+1$ hypersurfaces in general position must be constant
also follows from Ru's defect inequality \cite{RuHypersurface}.
The fact that an analytic map from $\aone$ to a projective
variety $X\subset\mathbf{P}^N$ omitting $\dim X+1$ hypersurfaces
of $\mathbf{P}^N$ in general position with $X$
is a consequence of An's defect inequality \cite{AnProc}.

\begin{proof}[Proof of Corollary \ref{degencor}]
Suppose $f$ is an algebraically non-degenerate analytic map
from $\aone$ to $Y$ omitting the $D_i.$  
Let $l_0$ be the cardinality of the set
$$
	\{D_i \cap Y : D_i\not\supset Y\},
$$
and note that $D_i\cap Y\ne\emptyset$ for all $i$ because the
$D_i$ are assumed ample. By the theorem,  
\begin{align}\label{l0}
\ell_0\le r.
\end{align}

We now estimate $l_0$ as in \cite{NoguchiWinkelmann}.
Let $n=\dim Y$.  Without loss of generality we may assume that
$D_1 \cap Y, D_2\cap Y,...,D_{\ell_0}\cap Y$ are distinct.  For $1\le j\le
\ell_0$, let $s_j$ be the number of divisors $D_i$ with
$D_i\cap Y=D_j\cap Y$.  Rearranging the indices, we may assume that
\begin{align}\label{sj}
s_1\ge s_2\ge ...\ge s_{\ell_0}.
\end{align}
\par
We first consider the case when $\ell_0\le n$.
Because the $D_i$ are ample and by the definition of $\ell_0,$ we have
$$
\emptyset\ne Y\cap\left(\bigcap_{j=1}^{\ell_0} D_j\right)
=Y\cap\left(\bigcap_{j=1}^{\ell} D_j\right)
$$
Because the $D_i$ are in general position, this implies
$$
	\dim Y-\ell_0\le \dim X-\ell,
$$
and hence
$$
	\ell\le \ell_0+\codim Y \le r+\codim Y
$$
by (\ref{l0}).
\par
The remaining case is $\ell_0> n$. 
Again, because the $D_i$ are ample, 
$$
	\emptyset \ne Y\cap\left(\bigcap_{j=1}^{n} D_j\right)
	=Y\cap\left(\bigcap_{i\in I} D_i\right),
$$
where $I=\{i : D_i\supset Y, \text{ or } D_i\cap Y=D_j\cap Y \text{ for
some } 1\le j\le n
\}
$.
Let $s_0$ denote the number of divisors $D_i$ such that 
\hbox{$D_i\supset Y.$} Since the divisors are 
in general position, this implies
\begin{align}\label{sum}
\sum_{i=0}^n s_i=\#I\le \dim X.
\end{align}
On the other hand, it follows from (\ref{sj}) that
$$
\frac1{\ell_0}\sum_{i=1}^{\ell_0} s_i \le \frac 1n\sum_{i=1}^n s_i.
$$
Therefore,
\begin{align}\label{sum2}
\sum_{i=1}^{\ell_0} s_i\le \frac {\ell_0}n\sum_{i=1}^n s_i.
\end{align}
As $\ell_0>n$, we have $s_0\le \frac {\ell_0}n s_0$.  
Combining this with (\ref{sum2}),
(\ref{sum}), and (\ref{l0}), we have
$$
\ell=\sum_{i=0}^{\ell_0} s_i\le \frac {\ell_0}n\sum_{i=0}^n s_i\le  \frac
{\ell_0}n\dim X\le \frac rn\dim X.\qedhere
$$
\end{proof}

We conclude with the analog of the Noguchi/Winkelmann theorem
for non-Archimedean analytic maps from $\aonecross.$  We remind the 
reader that in the complex case, the exponential map 
provides a non-constant complex analytic 
map from $\aone(\mathbf{C})$ to $\aonecross(\mathbf{C})$
and of course $\aonecross(\mathbf{C})\subset\aone(\mathbf{C}),$
so that in complex analytic geometry there is no difference in algebraic
degeneracy theorems from maps from $\aone(\mathbf{C})$ or
$\aonecross(\mathbf{C}).$  However, in the non-Archimedean case
the only analytic maps from $\aone$ to $\aonecross$ are the constants,
and thus it is more difficult to have an algebraically non-degenerate
analytic map from $\aone$ than it is to have an algebraically
non-degenerate analytic map from $\aonecross.$

\begin{theorem}\label{aonecrossthm}
Let $Y$ be a possibly singular projective variety that admits
a desingularization \hbox{$\widetilde{Y}\rightarrow Y,$}
and let $\iota:Y\rightarrow X$ be a morphism to a smooth projective
variety $X.$
Let $\{D_i\}_{i=1}^\ell$ be $\ell$ irreducible effective divisors on $X$
such that $\{\iota^* D_i\}_{i=1}^\ell$ form $\ell$ distinct effective
Cartier divisors on $Y.$ Let $a$ denote the dimension of the
Albanese variety of $Y.$ Let $r$ be the rank of the subgroup
generated by the $c_1(D_i)$ in $\NS(X).$ If
\hbox{$\ell > r+\dim Y - a,$}
then any analytic map from 
$\aonecross$ to $Y$ is either algebraically degenerate or
intersects the support of at least one of the $\iota^*D_i.$
\end{theorem}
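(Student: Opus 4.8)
The plan is to reduce Theorem~\ref{aonecrossthm} to Theorem~\ref{aonethm} (the $\aone$-case) combined with the semi-Abelian machinery of Section~2, by using the \emph{quasi-Albanese} (generalized Albanese) morphism rather than the ordinary Albanese. Suppose $f:\aonecross\to Y$ is algebraically non-degenerate and omits the supports of all the $\iota^*D_i$; I aim for a contradiction. First I would pass to the desingularization: since $f$ is non-degenerate, it lifts (as in the proof of Corollary~\ref{ratabdegen}) to an analytic map $\tilde f:\aonecross\to\widetilde Y$ omitting the pullbacks $\tilde\iota^*D_i$, and the Albanese variety of $\widetilde Y$ agrees with that of $Y$, of dimension $a$. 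As in the proof of Theorem~\ref{aonethm}, the hypothesis $\ell>r+\dim Y-a$ lets me choose integers $a_i$, not all zero, with $\sum a_i c_1(\tilde\iota^*D_i)=0$ in $\NS(\widetilde Y)$ \emph{modulo} the span of the classes coming from the Albanese factor; the precise bookkeeping is that the divisor $D=\sum a_i\tilde\iota^*D_i$, which is supported on the $\tilde\iota^*D_i$ and is not identically zero (the $\tilde\iota^*D_i$ being distinct and effective), can be arranged to be algebraically equivalent to zero after subtracting a divisor pulled back from the Albanese.

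Next I would invoke the structure theory of semi-Abelian varieties: the generalized Albanese variety $\mathrm{Alb}(\widetilde Y,D)$ is a semi-Abelian variety $S$ — an extension of the ordinary Albanese (dimension $a$) by a torus whose dimension is governed by $D$ — together with a morphism $u:\widetilde Y\setminus\operatorname{supp}(D)\to S$ universal for morphisms to semi-Abelian varieties. Because $\tilde f$ omits $\operatorname{supp}(D)$, the composite $u\circ\tilde f:\aonecross\to S$ is a genuine analytic map to a semi-Abelian variety, so by Theorem~\ref{savhom} it is the translate of a homomorphism $\gm\to S$, hence has image the translate of a semi-Abelian subvariety of $S$. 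The point of the numerical hypothesis is to force this image to be lower-dimensional than $S$ itself — more precisely, the rank deficit $\ell-(r-a)>\dim Y$ produces at least one independent rational function among the $\tilde\iota^*D_i$ that pulls back from the torus part of $S$, showing $u\circ\tilde f$ is not dominant. Then $\tilde f$ followed by $u$ lands in a proper semi-Abelian subvariety, and descending back to $\widetilde Y$ one gets that $\tilde f$ (hence $f$) is algebraically degenerate, the contradiction we wanted.

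Concretely, I expect the argument to bifurcate exactly as in the $\aone$ case: either $\widetilde Y$ admits a non-constant rational map to a semi-Abelian variety through which $\tilde f$ factors non-trivially — and then $\tilde f$ is degenerate by Theorem~\ref{savsubv} (or Corollary~\ref{savdegen}, adapted to the $\aonecross$ source via Theorem~\ref{savhom}) — or it does not, in which case the quasi-Albanese of $(\widetilde Y,D)$ is a torus $T$ of dimension at most $\dim Y - a$ plus the rank contribution, the logarithmic $1$-form count is too small, and one finds a non-constant rational function $h$ on $\widetilde Y$ with $\operatorname{div}(h)$ supported on $\operatorname{supp}(D)$ exactly as in the proof of Theorem~\ref{aonethm}; then $h\circ\tilde f$ maps $\aonecross$ to $\gm$, which by Proposition~\ref{gmprodprop} (applied with $n=1$) is the translate of a homomorphism and in particular not constant only if $h\circ\tilde f$ is a genuine non-trivial power of the coordinate — but that is compatible with non-degeneracy, so I instead use that $h$ exists in enough independent copies to cut the image of $\tilde f$ down, forcing degeneracy.

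The main obstacle will be making precise the claim that the numerical hypothesis $\ell>r+\dim Y-a$ translates into the existence of the right algebraically-trivial-modulo-Albanese combination $\sum a_i\tilde\iota^*D_i$, i.e.\ correctly accounting for how the Albanese dimension $a$ enters the rank count: one must compare the rank of the subgroup of $\NS(\widetilde Y)$ generated by the $c_1(\tilde\iota^*D_i)$ with $r$, and simultaneously track that the relevant $\Pic^0$ is $a$-dimensional so that one has exactly $a$ ``extra'' degrees of freedom coming from translating by pullbacks from the Albanese. The cleanest route is probably to work directly with the logarithmic Picard/quasi-Albanese exact sequence $0\to \Pic^0(\widetilde Y)\to \Pic^0(\widetilde Y, D)\to (\text{torus part})\to 0$ and the corresponding bound on logarithmic irregularity, deducing that if $\ell$ exceeds $r + \dim Y - a$ then the torus part together with $\Pic^0$ cannot absorb all $\ell$ of the classes $c_1(\tilde\iota^*D_i)$, yielding a non-constant $h$ with divisor supported on the $\tilde\iota^*D_i$ — after which the endgame $h\circ\tilde f:\aonecross\to\gm$ constant (by Theorem~\ref{savhom} with target $\gm$, once one also quotients by the homomorphism part) finishes the proof exactly as before. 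I would expect the write-up to run parallel to the proof of Theorem~\ref{aonethm}, with the ordinary Albanese/Picard replaced throughout by their quasi- (logarithmic) counterparts and Theorem~\ref{savhom} doing the work that Theorem~\ref{savhyp} did there.
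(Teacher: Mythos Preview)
Your high-level strategy --- lift to $\widetilde Y$, form the quasi-Albanese $\alpha:Y'\to S$ with $Y'=\widetilde Y\setminus\bigcup\operatorname{supp}\tilde\iota^*D_i$, and apply the semi-Abelian results of Section~2 --- is exactly what the paper does. But two essential steps are missing from your sketch, and the third-paragraph fallback is a dead end. The paper's argument is this: by Serre, $\dim S=a+\operatorname{rank}J$ where $J$ is the kernel of the map from the free group on the $\tilde\iota^*D_i$ to $\NS(\widetilde Y)$; since the $\tilde\iota^*D_i$ are distinct, $\tilde\iota^*$ injects the kernel of (free group on the $D_i)\to\NS(X)$, which has rank $\ell-r$, into $J$; hence $\dim S\ge a+\ell-r>\dim Y$. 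Now $\alpha\circ\tilde f$ lies in a translate $B$ of a semi-Abelian subvariety of $S$ contained in $\alpha(Y')$ (Theorem~\ref{savsubv}, whose proof via Theorem~\ref{savhom} works verbatim for $\aonecross$). The crucial point is that, by the universal property of the quasi-Albanese, differences of points in $\alpha(Y')$ \emph{generate} $S$, so $\alpha(Y')$ is not itself a translate of a proper semi-Abelian subvariety; combined with $\dim\alpha(Y')\le\dim Y<\dim S$, this forces $B$ to be proper in $\alpha(Y')$, and $\tilde f$ is degenerate.

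Your write-up never arrives at the dimension inequality $\dim S>\dim Y$ (the phrases ``algebraically equivalent to zero after subtracting a divisor pulled back from the Albanese'' and ``rank deficit $\ell-(r-a)$'' are not correct formulations of the rank computation above), and more importantly you never invoke the generating property of $\alpha(Y')$ in $S$: knowing only that $\alpha\circ\tilde f$ sits in some translate $B$ of a semi-Abelian subvariety does not give degeneracy of $\tilde f$ unless you also know $\alpha(Y')\not\subset B$. Finally, the attempt in your third paragraph to imitate the $\aone$ proof via a single rational function $h$ with divisor supported on the $\tilde\iota^*D_i$ cannot close the argument here: for a source $\aonecross$, the composite $h\circ\tilde f:\aonecross\to\gm$ is merely the translate of a homomorphism (e.g.\ $z\mapsto z^n$), not constant, so one such $h$ yields no contradiction. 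Your proposed salvage (``$h$ exists in enough independent copies'') would, if carried out, rebuild the torus part of $S$ and land you back in the quasi-Albanese argument --- which still requires the missing generating-property step.
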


\begin{remark*} In characteristic zero by Hironaka's Theorem,
any $Y$ admits a desingularization $\widetilde{Y}.$
Because resolution of singularities is not yet known in positive
characteristic, we make the existence of a desingularization an
explicit hypothesis.  Unlike in Theorem~\ref{aonethm}, here we will need
a morphism rather than a rational map to the Albanese variety, so working
with a normalization is not sufficient.
\end{remark*}

\begin{proof}
Let $\tilde\iota$ be the natural map from $\widetilde{Y}$ to $X$
induced by $\iota.$
Let $Y'$ be the variety obtained by deleting the supports of
$\tilde\iota^*D_i$ from $\widetilde{Y}.$ 
By \cite{Serre58a},
there is a morphism $\alpha$ from $Y'$ to a semi-Abelian variety $S$
such that $S$ is generated by the differences of points in the
image of $Y'$ and such that $S$ is the extension of the Albanese
variety $A$ of $\widetilde{Y}$ by a multiplicative torus.
As in \cite{Serre58b}, let $I$ denote the
free Abelian group generated by the $\tilde\iota^* D_i$ and let $J$
be the kernel of the mapping from $I$ to $\NS(\widetilde{Y}).$
Then, it follows from 
the discussion in \cite{Serre58b} that
the dimension of $S$ is the dimension of $A$ plus the rank of $J.$

Let $K$ be the subgroup of the free Abelian group generated by the 
$D_i$ that maps to zero in $\NS(X).$ By hypothesis, $K$ has rank
$\ell-r.$  Consider the map 
\hbox{$\tilde\iota^*:K\rightarrow J.$} By our assumption that the
$\iota^*D_i$ are distinct and the fact that $\widetilde{Y}$ maps
onto $Y,$ we have that the $\tilde\iota^*D_i$
are distinct. Hence, $\tilde\iota^*$
injects $K$ into $J,$ and thus $J$ has rank at least $\ell-r.$
So, if 
\hbox{$\ell > r+\dim Y - a,$} then
\hbox{$\dim S \ge \ell-r+ a > \dim Y.$}
If $f$ is an analytic map to $Y$ not contained in the singular locus
and not intersecting the supports of $\iota^* D_i,$
then $f$ lifts to an analytic map $\tilde f$ to $Y'.$
By Theorem~\ref{savsubv}, $\alpha\circ\tilde f$ is contained in the
translate of a semi-Abelian subvariety of $S$ contained in 
the proper subvariety $\alpha(Y').$
Because differences of points in $\alpha(Y')$ 
generate $S,$ the variety $\alpha(Y')$  
cannot be a translate of a proper semi-Abelian subvariety of $S,$
and hence $\tilde f$ and $f$ are algebraically degenerate.
\end{proof}

Applying the argument on pages~606--607 of Noguchi/Winkelmann,
\textit{i.e.,} replacing $\ell_0\le r$ with
$\ell_0 \le r+\dim Y - a$ in equation~(\ref{l0}) in the proof
of our Corollary~\ref{degencor}, then yields the following corollaries.

\begin{corollary}
Let $Y$ be a closed positive dimensional 
subvariety of a non-singular projective
variety $X$ admitting a desingularization 
\hbox{$\widetilde{Y}\rightarrow Y.$}
Let $a$ be the dimension of the 
Albanese variety of $Y.$ Let $\{D_i\}_{i=1}^\ell$ be $\ell$ irreducible,
effective, ample divisors in general position on $X.$ 
Let $r$ be the rank of the subgroup of $\NS(X)$
generated by $\{c_1(D_i)\}_{i=1}^\ell.$
If there exists an algebraically non-degenerate analytic map
from $\aonecross$ to $Y$ omitting each of the $D_i$ that does not
contain all of $Y,$
then 
$$
	(\ell - \dim X)\cdot \dim Y \le \dim X \cdot \max \{0,r-a\}
$$
\end{corollary}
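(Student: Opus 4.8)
The plan is to follow the proof of Corollary~\ref{degencor} essentially verbatim, using Theorem~\ref{aonecrossthm} in place of Theorem~\ref{aonethm} as the source of the bound on how many of the divisors the curve can miss. Let $n=\dim Y>0$, and let $f:\aonecross\rightarrow Y$ be an algebraically non-degenerate analytic map omitting each $D_i$ that does not contain $Y$.

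First I would extract the relevant subfamily. Put $\ell_0=\#\{D_i\cap Y : D_i\not\supset Y\}$, and after reindexing assume that $D_1\cap Y,\dots,D_{\ell_0}\cap Y$ are distinct and that none of $D_1,\dots,D_{\ell_0}$ contains $Y$. With $\iota:Y\hookrightarrow X$ the inclusion, the divisors $\iota^*D_1,\dots,\iota^*D_{\ell_0}$ are then $\ell_0$ distinct effective Cartier divisors on $Y$; the map $f$ is algebraically non-degenerate, hence not contained in the singular locus of $Y$, and $f$ misses the support of every $\iota^*D_j$ because it omits each $D_j$. Since $Y$ admits a desingularization by hypothesis and its Albanese variety has dimension $a$, the contrapositive of Theorem~\ref{aonecrossthm} applied to $\iota^*D_1,\dots,\iota^*D_{\ell_0}$ --- whose Chern classes span a subgroup of $\NS(X)$ of some rank $r'\le r$ --- yields
$$
	\ell_0 \le r' + n - a \le r + n - a,
$$
which is precisely the substitute for inequality~(\ref{l0}) mentioned in the paragraph preceding the corollary.

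Next I would rerun, unchanged, the two-case combinatorial estimate of $\ell$ in terms of $\ell_0$ from the proof of Corollary~\ref{degencor}, which uses only ampleness and general position of the $D_i$. If $\ell_0\le n$, ampleness gives $\emptyset\ne Y\cap\bigcap_{j=1}^{\ell_0}D_j=Y\cap\bigcap_{j=1}^{\ell}D_j$, so general position forces $\ell\le\ell_0+\codim Y\le(r+n-a)+(\dim X-n)=r+\dim X-a$; thus $\ell-\dim X\le r-a$, and multiplying by $n>0$ and using $n\le\dim X$ gives $(\ell-\dim X)n\le(r-a)n\le\dim X\cdot\max\{0,r-a\}$ (the last inequality being immediate when $r-a\le 0$, in which case $\ell-\dim X\le 0$ already makes the left-hand side nonpositive). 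If instead $\ell_0> n$, the weighting argument of Corollary~\ref{degencor} --- ordering the multiplicities $s_1\ge\cdots\ge s_{\ell_0}$, bounding $\sum_{i=0}^n s_i\le\dim X$ by general position, and comparing the average of $s_1,\dots,s_n$ with the average of $s_1,\dots,s_{\ell_0}$ --- gives $\ell\le\frac{\ell_0}{n}\dim X\le\frac{r+n-a}{n}\dim X$, hence $n(\ell-\dim X)\le(r-a)\dim X$; and here $\ell_0>n$ forces $r-a>0$, so the right-hand side equals $\dim X\cdot\max\{0,r-a\}$. Either way, $(\ell-\dim X)\cdot\dim Y\le\dim X\cdot\max\{0,r-a\}$, as claimed.

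I do not expect a real obstacle: the substance lies in Theorem~\ref{aonecrossthm} and in the already-established combinatorics of Corollary~\ref{degencor}. The only points needing a little care are (i) verifying that the hypotheses of Theorem~\ref{aonecrossthm} really do hold for the restricted family --- distinctness of the $\iota^*D_j$ on $Y$, existence of the desingularization, and the fact that an algebraically non-degenerate $f$ omitting the $D_j$ stays off the supports of the $\iota^*D_j$ and out of the singular locus --- and (ii) the bookkeeping that folds the two cases, together with the sign of $r-a$, into the single clean inequality. The degenerate possibility $\ell_0=0$ (every $D_i$ contains $Y$) causes no trouble: then $Y\subseteq\bigcap_{j=1}^{\ell}D_j$, so general position forces $\ell\le\codim Y$ and hence $\ell-\dim X\le-\dim Y<0$, making the asserted inequality trivial.
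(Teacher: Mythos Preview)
Your proposal is correct and follows exactly the approach the paper indicates: replace the bound $\ell_0\le r$ coming from Theorem~\ref{aonethm} by the bound $\ell_0\le r+\dim Y-a$ coming from Theorem~\ref{aonecrossthm}, and rerun the combinatorics of Corollary~\ref{degencor}. Your careful bookkeeping in folding the two cases and the sign of $r-a$ into the single inequality, and your treatment of the trivial case $\ell_0=0$, simply make explicit what the paper leaves to the reader.
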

\begin{corollary} 
Let $X$ be a non-singular projective
variety in characteristic zero.  Let $\{D_i\}_{i=1}^\ell$ be $\ell>\dim X$
irreducible, effective, ample divisors in general position on $X,$ 
and let $r$ be the rank in $\NS(X)$ of the subgroup generated
by $\{c_1(D_i)\}_{i=1}^\ell.$
Let $f$ be an analytic map from $\aonecross$ to $X$ omitting each of the
$D_i.$   Then the image of $f$  is contained in an algebraic subvariety
$Y$ of $X$ such that
$$
\dim Y\le \frac{r\cdot\dim X}{\ell-\dim X}
$$
In particular, if
$$
	\ell\ge(r+1)\cdot\dim X+1
$$
then $f$ is constant.
\end{corollary}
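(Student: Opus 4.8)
The plan is to deduce this corollary from the preceding one simply by taking $Y$ to be the Zariski closure of the image of $f$. First I would dispose of the trivial case: if $f$ is constant there is nothing to prove, so assume $f$ is non-constant and set $Y\subset X$ equal to the Zariski closure of $f(\aonecross)$, which is then a positive-dimensional irreducible closed subvariety of $X$. By construction, $f$ viewed as an analytic map to $Y$ is algebraically non-degenerate. Since we are working in characteristic zero, Hironaka's theorem supplies a desingularization $\widetilde{Y}\rightarrow Y$, so all hypotheses of the preceding corollary are in force for this choice of $Y$. I would also remark that no $D_i$ can contain $Y$: the image of $f$ is Zariski dense in $Y$, yet $f$ omits every $D_i$, so $Y\subset D_i$ is impossible; hence the clause ``that does not contain all of $Y$'' imposes no restriction at all here.

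Next I would apply the preceding corollary to the algebraically non-degenerate map $f:\aonecross\rightarrow Y$, which gives
$$
(\ell-\dim X)\cdot\dim Y\le \dim X\cdot\max\{0,r-a\},
$$
where $a\ge 0$ is the dimension of the Albanese variety of $Y$. Using $\max\{0,r-a\}\le r$ and dividing by the positive quantity $\ell-\dim X$ (positive by the standing hypothesis $\ell>\dim X$) yields
$$
\dim Y\le\frac{r\cdot\dim X}{\ell-\dim X},
$$
which is the asserted bound since the image of $f$ lies in $Y$. For the final assertion, if $\ell\ge(r+1)\dim X+1$ then $\ell-\dim X\ge r\dim X+1>r\dim X$, so the right-hand side is strictly less than $1$; hence $\dim Y=0$, meaning $Y$ is a single point, and therefore $f$ is constant.

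Since the preceding corollary does all the real work, I do not anticipate a genuine obstacle. The only points that require a little care are the bookkeeping of the reduction to $Y=\overline{f(\aonecross)}$: checking that this $Y$ honestly satisfies every hypothesis of that corollary, in particular the existence of a desingularization (which is exactly where the characteristic-zero assumption enters) and the fact that the ``does not contain all of $Y$'' condition is vacuous, and confirming that the division by $\ell-\dim X$ is legitimate, which is precisely what the hypothesis $\ell>\dim X$ guarantees.
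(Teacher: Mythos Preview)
Your proposal is correct and is essentially the route the paper takes: the paper does not give a separate argument for this corollary but states that both final corollaries follow from Theorem~\ref{aonecrossthm} by running the proof of Corollary~\ref{degencor} with the bound $\ell_0\le r$ replaced by $\ell_0\le r+\dim Y-a$, and your reduction (take $Y$ to be the Zariski closure of $f(\aonecross)$, invoke the preceding corollary, bound $\max\{0,r-a\}\le r$, divide by $\ell-\dim X$) is exactly how one extracts the second of those corollaries from the first.
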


\textbf{Acknowledgements.} The authors would like to thank 
Jean-Luis Colliot-Th\'el\`ene for some comments about Chern classes
in \'etale cohomology. This paper was completed while the second 
and third authors
were visiting the Institute of Mathematics at the Vietnamese Academy of
Science and Technology, to which they would like to express their thanks
for the warm hospitality they receive there.


\end{document}